\newtheorem{theorem}{Theorem}[section]
\newtheorem{pro}[theorem]{Proposition}
\newtheorem{lemma}[theorem]{Lemma}
\theoremstyle{definition}
\newtheorem{definition}[theorem]{Definition}
\theoremstyle{remark}
\newtheorem{remark}[theorem]{Remark}
\numberwithin{equation}{section}
\newcommand{\Cset}{\mathbb{C}}
\newcommand{\Rset}{\mathbb{R}}
\newcommand{\Tset}{\mathbb{T}}
\newcommand{\Zset}{\mathbb{Z}}
\newcommand{\rmd}{{\rm d}}
\newcommand{\rme}{{\rm e}}
\newcommand{\rmi}{\mskip2mu{\rm i}\mskip1mu}
\DeclareMathOperator{\Order}{O}
\DeclareMathOperator{\am}{am}
\DeclareMathOperator{\cn}{cn}
\DeclareMathOperator{\dn}{dn}
\DeclareMathOperator{\sn}{sn}
\DeclareMathOperator{\sech}{sech}
\newif\iffigures
\begin{document}

\title{The billiard inside an ellipse deformed by the curvature flow}

\author[J. Damasceno]{Josu\'e Damasceno}
\address{Departamento de Matem\'atica \\
         Universidade Federal de Ouro Preto \\
         35.400--000, Ouro Preto, Brazil}
\email{josue@iceb.ufop.br}

\author[M. J. Dias Carneiro]{Mario J. Dias Carneiro}
\address{Departamento de Matem\'atica \\
         ICEx, Universidade Federal de Minas Gerais \\
         30.123--970, Belo Horizonte, Brazil}
\email{carneiro@mat.ufmg.br}

\author[R. Ram{\'\i}rez-Ros]{Rafael Ram\'{\i}rez-Ros}
\address{Departament de Matem\`{a}tiques \\
         Universitat Polit\`{e}cnica de Catalunya \\
         Diagonal 647, 08028 Barcelona, Spain}
\email{rafael.ramirez@upc.edu}

\thanks{R. R.-R. is supported in part by CUR-DIUE Grant 2014SGR504 (Catalonia)
and MINECO-FEDER Grant MTM2012-31714 (Spain).}

\subjclass[2010]{37E40, 37J45, 37B40, 53C44}

\keywords{billiard, curvature flow, topological entropy, Melnikov method}

\date{\today}

\commby{???}

\begin{abstract}
The billiard dynamics inside an ellipse is integrable.
It has zero topological entropy, four separatrices in the phase space,
and a continuous family of convex caustics: the confocal ellipses.
We prove that the curvature flow destroys the integrability,
increases the topological entropy,
splits the separatrices in a transverse way,
and breaks all resonant convex caustics.
\end{abstract}

\maketitle

\section{Introduction}

One can shorten a smooth plane curve by moving it in the direction of
its normal vector at a speed given by its curvature.
This evolution generates a flow
(called \emph{curvature flow} or \emph{curve shortening flow})
in the space of smooth plane curves that coincides with
the negative $L^2$-gradient flow of the length of the curve.
That is, the curve is shrinking as fast as it can using only local information. 

M.~Gage and R.~Hamilton~\cite{GageHamilton1986} described the long time
behavior of smooth convex plane curves under the curvature flow.
They proved that convex curves stay convex and shrink to a point as
they become more circular.
This convergence to a ``limit'' circle takes
place in the $C^\infty$-norm after a suitable normalization.
M.~Grayson proved that any embedded planar curve
becomes convex before it shrinks to a point~\cite{Grayson1987}.

The length, the enclosed area, the maximal curvature,
the number of inflection points, and other geometric quantities
never increase along the curvature flow~\cite{ChouZhu2001}.
On the contrary, we present an example of how the curvature flow
can increase the topological entropy of the billiard dynamics inside
convex curves.
The \emph{topological entropy} of a dynamical system is a nonnegative extended
real number that is a measure of the complexity of the system~\cite{KatokH1995}.
To be precise, the topological entropy represents the exponential growth rate
of the number of distinguishable orbits as the system evolves.
Therefore, increasing entropy means a more complex billiard dynamics,
which is a bit surprising since the curvature flow rounds any convex smooth
curve and circles are the curves with the simplest billiard dynamics.

Birkhoff~\cite{Birkhoff1927} introduced the problem of
\emph{convex billiard tables} almost 90 years ago as a way to describe
the motion  of a free particle inside a closed convex smooth curve.
The particle is reflected at the boundary according to the law
``angle of incidence equals angle of reflection''.

If the boundary is an ellipse, then the billiard dynamics is
\emph{integrable}~\cite{ChangFriedberg1988,KozlovTreshchev1991,Tabachnikov1995}.
In particular, billiards inside ellipses have zero topological entropy.
The motion along the major axis of the ellipse corresponds
to a hyperbolic two-periodic orbit whose unstable and stable invariant curves
coincide, forming four \emph{separatrices}.
The points on these separatrices correspond to the billiard
trajectories passing through the foci of the ellipse.
The interior of an ellipse is foliated with a continuous
family of convex caustics: its confocal ellipses.
A \emph{caustic} is a curve inside the billiard table with the property
that a billiard trajectory, once tangent to it, stays tangent after
every reflection.
Caustics with Diophantine rotation numbers persist
under small smooth perturbations of the boundary~\cite{Lazutkin1973},
but \emph{resonant caustics}
---the ones whose tangent trajectories are closed polygons,
   so that their rotation numbers are rational---
are fragile structures that generically
break up~\cite{RamirezRos2006,PintodeCarvalhoRamirezRos2013}.

All these dynamical and geometric manifestations of the integrability
of billiards inside ellipses disappear when the ellipse is slightly
deformed by the curvature flow.

\begin{theorem}\label{thm:MainTheorem}
The curvature flow breaks all resonant convex caustics,
splits the separatrices in a transverse way,
increases the topological entropy,
and destroys the integrability of the billiard inside an ellipse.
\end{theorem}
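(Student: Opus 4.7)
The plan is to treat the curvature flow for small time $\epsilon$ as a $C^\infty$-small perturbation of the ellipse and to apply perturbation theory for twist maps. If $\gamma_0$ is the unperturbed ellipse parametrized by $\varphi(u)$, then at first order the perturbed boundary is
\[
\gamma_\epsilon(u) = \varphi(u) + \epsilon\,\kappa(u)\,N(u) + \Order(\epsilon^2),
\]
where $\kappa$ and $N$ are the curvature and inward unit normal of the ellipse. The billiard map inside $\gamma_\epsilon$ is then a near-integrable twist map with generating function $h_\epsilon = h_0 + \epsilon h_1 + \Order(\epsilon^2)$, where $h_0$ is the chord length on the ellipse and $h_1$ can be written down in terms of $\kappa$ and $N$.

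First, I would attack the resonant caustics. Fix a rational rotation number $p/q$ and consider the corresponding one-parameter family of closed inscribed $q$-gons $\{P_{p/q}(\theta)\}_{\theta\in\Tset}$ tangent to the resonant confocal ellipse. Following the subharmonic Melnikov approach of~\cite{RamirezRos2006,PintodeCarvalhoRamirezRos2013}, define
\[
L_{p/q}(\theta) = \sum_{j=0}^{q-1} h_1\bigl(u_j(\theta),u_{j+1}(\theta)\bigr),
\]
so that the persistent $q$-periodic orbits for small $\epsilon$ are, to leading order, the critical points of $L_{p/q}$. A non-constant $L_{p/q}$ produces only finitely many critical points, which means the resonant caustic is destroyed. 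The task is thus to show $L_{p/q}\not\equiv\mathrm{const}$ for every rational $p/q\in(0,1/2]$, and this is the main technical obstacle: one has to evaluate the sum along a polygonal orbit using the elliptic coordinates and the explicit formula $\kappa = ab/(a^2\sin^2 u + b^2\cos^2 u)^{3/2}$, and check that the resulting trigonometric series in $\theta$ has at least one nonzero Fourier coefficient for every $q$.

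Next, I would treat the separatrix splitting. The unperturbed separatrices correspond to trajectories through the foci; they are parametrized by the Jacobi elliptic functions $\sn,\cn,\dn$, which gives the standard hyperbolic parametrization of the hyperbolic $2$-periodic orbit along the major axis. The Melnikov potential is
\[
M(\theta) = \sum_{k\in\Zset} \bigl[h_1(u_k(\theta),u_{k+1}(\theta)) - h_1(u_k^\star,u_{k+1}^\star)\bigr],
\]
summed along a biasymptotic orbit on the unperturbed separatrix, with $\{u_k^\star\}$ the hyperbolic $2$-cycle. Transverse splitting is equivalent to $M$ having a non-degenerate critical point, which is implied if the Fourier series of $M$ on the circle of phases is non-trivial. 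I would compute the Fourier coefficients via residues after extending the elliptic-function parametrization to the complex plane, so that the coefficients involve $\sech$-type functions at the poles; this is the same mechanism as in \cite{PintodeCarvalhoRamirezRos2013}, and the expected outcome is an explicit non-vanishing expression depending on the eccentricity.

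Once both Melnikov functions are shown to be non-trivial, the conclusions follow by standard arguments. Transverse homoclinic intersections yield, via the Smale–Birkhoff theorem, an invariant hyperbolic set conjugate to a shift, hence $h_{\mathrm{top}}>0$; positive topological entropy (or alternatively the existence of transverse separatrices, or the destruction of all resonant caustics) already contradicts integrability in the usual sense. The only nontrivial part of the whole program is therefore the Fourier/residue computation proving non-vanishing of $L_{p/q}$ for every $q$ and of $M$; everything else is formal consequence.
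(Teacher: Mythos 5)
Your overall architecture coincides with the paper's: compute the first-order deformation of the ellipse under the curvature flow, form subharmonic and homoclinic Melnikov potentials, show they are non-constant, and then invoke standard results (Burns--Weiss for positive entropy, Cushman for non-integrability) to finish. The gap is that you leave the decisive step --- non-vanishing of $L_{p/q}$ for \emph{every} rational rotation number and of $M$ --- as a ``task'' to be done by computing Fourier coefficients via residues, ``the same mechanism as in'' the cited references. That mechanism does not apply here as stated: the residue/Fourier machinery of \cite{DelshamsRamirez1996,PintodeCarvalhoRamirezRos2013} requires the perturbation to be \emph{entire} in the angular variable, whereas the curvature-flow perturbation is
$\mu_1(\varphi) = -ab/(a^2\cos^2\varphi+b^2\sin^2\varphi)^2$,
which has complex poles in $\varphi$. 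The paper explicitly points this out and cannot invoke those theorems directly. Moreover, even if one set up the Fourier computation, proving that some coefficient is nonzero \emph{for every} $q$ is exactly the hard part, and your proposal gives no argument for it.

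The paper's actual resolution turns the obstruction into the proof. Because $\mu_1$ is meromorphic rather than entire, the summand $\tilde{\mu}^{(m,n)}_1(t)=-ab/(a^2\cn^2 t+b^2\sn^2 t)^2$ extends to an elliptic function of order four on $\Cset$ with explicitly located double poles at $\pm\zeta/2+K'\rmi$ (mod periods), and likewise $\hat{\mu}_1(s)$ is meromorphic with double poles at $\pm h/2+\pi\rmi/2$ (mod $\pi\rmi$). Summing over the orbit, the paper shows the dominant Laurent coefficients at the two pole families add rather than cancel, so each Melnikov potential extends to an elliptic function of order two with genuine double poles --- hence it cannot be constant, for any $(m,n)$, with no Fourier analysis at all. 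As a bonus, order two forces the derivative to be an elliptic function of order three, which pins down \emph{all} real critical points ($\zeta\Zset/2$, resp.\ $h\Zset/2$) and shows they are nondegenerate, giving the transversality you need and the count of surviving periodic/homoclinic orbits. If you want to salvage your route, you must either carry out and lower-bound the Fourier coefficients for all $q$ (nontrivial), or switch to this pole-counting argument.
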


The proof of this theorem has two steps.
First,
we introduce the subharmonic and homoclinic Melnikov potentials
associated to the perturbation of the ellipse under the curvature flow
following the theory developed
in~\cite{DelshamsRamirez1996,DelshamsRamirez1997,RamirezRos2006,PintodeCarvalhoRamirezRos2013}.
In order to study these Melnikov potentials,
we need several explicit formulas for the unperturbed billiard dynamics
that can be found in~\cite{ChangFriedberg1988,DelshamsRamirez1996}.
Second, we check that none of these Melnikov potentials is constant,
which implies that the separatrices split and
all resonant convex caustics break up.
The loss of integrability follows directly from a theorem of
Cushman~\cite{Cushman1978},
whereas the increase of the topological entropy follows from a
theorem of Burns and Weiss~\cite{BurnsWeiss1995}.

We also find all the critical points of the Melnikov potentials,
so we can locate all primary homoclinic points and all Birkhoff
periodic trajectories, at least for small enough perturbations.
Finally, we relate the homoclinic Melnikov potential to the limit
of the subharmonic Melnikov potential when the resonant caustic tends
to the separatrices.
This is, up to our knowledge, the first time that such relation is
explicitly shown up in a discrete system.
Similar relations in continuous systems (that is, for ODEs) have been
known from the eighties, see~\cite[\S 4.6]{GuckemheimerHolmes1989}.

Our perturbed ellipses are static,
we do not deal with time-dependent billiards.

This paper is strongly inspired by Dan Jane's example~\cite{Jane2007}
of a Riemannian surface for which the Ricci flow increases
the topological entropy of the geodesic flow.
His example is also based in a Melnikov computation,
although the final step of his argument require the numerical
evaluation of some Melnikov function.
On the contrary, our result is purely analytic,
since we characterize in a quite explicit way our Melnikov potentials
using the theory of elliptic functions.

We complete this introduction with a note on the organization of the article.
In Section~\ref{sec:BilliardEllipse} we review some known results
concerning billiards inside ellipses.
The first order deformation of the ellipse under the curvature flow
is given in Section~\ref{sec:EllipseCurvatureFlow}.
We review the Melnikov theory for area preserving twist maps
in the framework of billiards inside perturbed ellipses
in Section~\ref{sec:MelnikovPotentials}.
Finally, we check that these Melnikov potentials are not constant
by analyzing their complex singularities in Section~\ref{sec:Computations}.

\section{The billiard inside an ellipse}\label{sec:BilliardEllipse}

We consider the billiard dynamics inside the unperturbed ellipse
\begin{equation}\label{eq:Ellipse}
Q_0 =
\left\{ (x,y) \in \Rset^2 : x^2/a^2 + y^2/b^2 = 1 \right\},\qquad
0 < b < a. 
\end{equation}
Let $c = \sqrt{a^2-b^2}$ be the semi-focal distance of $Q_0$,
so the foci of $Q_0$ are the points $(\pm c,0)$.
We recall a geometric property of ellipses~\cite{Tabachnikov1995}.
Let
\[
C_\lambda =
\left\{
(x,y) \in \Rset^2 : \frac{x^2}{a^2-\lambda^2} + \frac{y^2}{b^2-\lambda^2} = 1
\right\},\qquad \lambda \not \in \{a, b\},
\]
be the family of \emph{confocal conics} to the ellipse $Q_0$.
It is clear that $C_\lambda$ is an ellipse for $0 < \lambda < b$ and
a hyperbola for $b < \lambda < a$.
No real conic exists for $\lambda > a$.

The fundamental property of the billiard inside $Q_0$ is that any segment
(or its prolongation) of a billiard trajectory is tangent to $C_\lambda$
for some fixed caustic parameter $\lambda > 0$.
The notion of tangency in the degenerate case $\lambda = b$ is the following.
A line is tangent to $C_b$ when it passes alternatively through the foci.

We refer to~\cite{AbramowitzS72,WhittakerW27} for a general background
on Jacobian elliptic functions.
Let us recall some basic facts about them.
Given a quantity $k\in(0,1)$, called the \emph{modulus},
the \emph{complete elliptic integral of the first kind} is 
\[
K = K(k) = \int_{0}^{\pi/2}(1-k^2 \sin^2 \phi)^{-1/2} \rmd\phi.
\]
We also write $K' = K'(k) = K(\sqrt{1-k^2})$.
The \emph{amplitude} function $\varphi = \am t = \am(t,k)$ is defined through
the inversion of the integral
\[
t = \int_{0}^{\varphi}(1-k^{2} \sin^2 \phi)^{-1/2}\rmd \phi.
\]
The \emph{elliptic sine} and the \emph{elliptic cosine}
are defined by the trigonometric relations
\begin{equation}\label{eq:PeriodicVariable}
\sn t = \sn(t,k) = \sin \varphi,\qquad
\cn t = \cn(t,k) = \cos \varphi.
\end{equation}
If the angular variable $\varphi$ changes by $2\pi$,
then the angular variable $t$ changes by $4K$.
Thus, any $2\pi$-periodic function in $\varphi$,
becomes a $4K$-periodic function in $t$.
We will usually denote the functions in $t$
by putting a tilde above the name of the function in $\varphi$.
For instance, the $4K$-periodic parameterization of the ellipse
\begin{equation}\label{eq:SubharmonicParameterization}
\tilde{q}_0: \Rset \to Q_0,\qquad
\tilde{q}_0(t) = (a\sn t, b\cn t),
\end{equation}
is obtained from the $2\pi$-periodic parameterization
\begin{equation}\label{eq:ClassicalParameterization}
q_0:\Rset \to Q_0,\qquad
q_0(\varphi) = (a\sin\varphi, b\cos\varphi).
\end{equation}
Clearly, $\tilde{q}_0(t) = q_0(\varphi)$.
The billiard dynamics associated to the convex caustic $C_\lambda$
becomes a rigid rotation $t \mapsto t + \delta$ in the angular variable $t$.
It suffices to find the modulus $k$ and shift $\delta$
associated to each convex caustic $C_\lambda$.

\begin{lemma}[\cite{ChangFriedberg1988}]
\label{lem:ChangFriedberg}
Once fixed a caustic parameter $\lambda \in (0,b)$,
we set the modulus $k \in (0,1)$ and the shift
$\delta \in (0,2K)$ by the formulas
\begin{equation}\label{eq:ModulusShift}
k^2 = (a^2-b^2)/(a^2 - \lambda^2),\qquad
\sn(\delta/2) = \lambda/b.
\end{equation}
The segment joining $\tilde{q}_0(t)$ and $\tilde{q}_0(t+\delta)$
is tangent to the caustic $C_\lambda$ for all $t \in \Rset$.
\end{lemma}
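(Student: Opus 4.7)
The plan is a direct verification: when $k$ and $\delta$ are as prescribed, I show that the chord joining $\tilde q_0(t)$ and $\tilde q_0(t+\delta)$ is tangent to $C_\lambda$ for every $t \in \Rset$. I first reduce tangency to an algebraic identity in the angular parameters of the endpoints, then translate the identity into the variable $t$, and finally verify it via Jacobi addition formulas.

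Parameterize $C_\lambda$ by $\psi \mapsto (\sqrt{a^2-\lambda^2}\sin\psi, \sqrt{b^2-\lambda^2}\cos\psi)$, so its tangent line at $\psi$ has equation $x\sin\psi/\sqrt{a^2-\lambda^2} + y\cos\psi/\sqrt{b^2-\lambda^2} = 1$. Requiring this line to pass through $q_0(\varphi_1)$ and $q_0(\varphi_2)$ gives two equations, each of the form $R\cos(\varphi_i-\theta) = 1$ with common $R,\theta$ depending on $\psi$. Eliminating $\psi$ via sum-to-product identities and setting $\Sigma = (\varphi_1+\varphi_2)/2$, $\Delta = (\varphi_1-\varphi_2)/2$ yields the tangency condition
\[
\sin^2\Delta = \frac{\lambda^2\bigl(a^2\cos^2\Sigma + b^2\sin^2\Sigma\bigr)}{a^2 b^2}.
\]

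Now substitute $\sin\varphi_i = \sn t_i$, $\cos\varphi_i = \cn t_i$, and set $t_1 = u-v$, $t_2 = u+v$ with $v = \delta/2$. Using the Jacobi product formulas
\[
\sn(u+v)\sn(u-v) = \frac{\sn^2 u - \sn^2 v}{D},\qquad
\cn(u+v)\cn(u-v) = \frac{\cn^2 u\, \cn^2 v - (1-k^2)\sn^2 u\, \sn^2 v}{D},
\]
with $D = 1 - k^2\sn^2 u\, \sn^2 v$, a direct manipulation gives
\[
1 - \cn t_1 \cn t_2 - \sn t_1 \sn t_2 = \frac{2\sn^2 v\, \dn^2 u}{D},\qquad
\cn t_1 \cn t_2 - \sn t_1 \sn t_2 = \frac{\cn^2 u - \sn^2 u\, \dn^2 v}{D}.
\]
Using these to rewrite both sides of the tangency condition (the left via $2\sin^2\Delta = 1 - \cos(\varphi_1-\varphi_2)$), clearing $D$, and invoking the prescribed relation $a^2-b^2 = k^2(a^2-\lambda^2)$, all $u$-dependence collects into
\[
\bigl(b^2\sn^2 v - \lambda^2\bigr)\bigl(a^2 \dn^2 u + k^2 \lambda^2 \sn^2 u\bigr) = 0.
\]
The second factor is strictly positive, so the identity holds for every $u$ iff $b\sn v = \lambda$, i.e., $\sn(\delta/2) = \lambda/b$. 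Since $\lambda/b \in (0,1)$ and $\sn$ is a monotone bijection from $(0,K)$ onto $(0,1)$, this determines a unique $\delta \in (0,2K)$, matching the lemma's prescription.

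The main obstacle is the algebraic bookkeeping in this final simplification: combining the three terms with denominator $D$, applying $\sn^2+\cn^2 = 1$ and $\dn^2 = 1-k^2\sn^2$ repeatedly, and exposing the common factor $b^2\sn^2 v - \lambda^2$ requires care. The specific relation between $a^2-b^2$ and $a^2-\lambda^2$ prescribed by the choice of $k$ is precisely what makes the remaining coefficients balance so that the $u$-dependent terms separate cleanly. No deeper machinery beyond the standard addition and Pythagorean identities for elliptic functions is needed.
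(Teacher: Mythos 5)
The paper does not prove this lemma at all: it is quoted verbatim from Chang--Friedberg \cite{ChangFriedberg1988}, so there is no in-paper argument to compare against. Your direct verification is correct and self-contained, and it follows the same classical route as the cited source: reduce tangency of the chord $q_0(\varphi_1)q_0(\varphi_2)$ to $C_\lambda$ to the condition $\sin^2\Delta=\lambda^2(a^2\cos^2\Sigma+b^2\sin^2\Sigma)/a^2b^2$ by eliminating the contact parameter $\psi$, then show that the substitution $\sin\varphi=\sn t$, $\cos\varphi=\cn t$ with $t_{1,2}=u\mp v$ turns this into an identity in $u$ precisely when $k^2=(a^2-b^2)/(a^2-\lambda^2)$ and $b\sn v=\lambda$. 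I checked the two product formulas you quote (both numerators do simplify over the single denominator $D=1-k^2\sn^2u\,\sn^2v$, not $D^2$) and the final factorization $(b^2\sn^2v-\lambda^2)(a^2\dn^2u+k^2\lambda^2\sn^2u)=0$; all are correct, and the second factor equals $a^2\cn^2u+b^2\sn^2u>0$ as claimed. The only step worth making explicit is the elimination of $\psi$: the two conditions $R\cos(\varphi_i-\theta)=1$ force $\theta\equiv\Sigma\pmod{\pi}$ only when $\varphi_1\not\equiv\varphi_2$, and one should note that the degenerate case $\cos\Delta=0$ cannot satisfy the tangency condition since $\lambda<b$ makes the right-hand side strictly less than $1$; with that remark the argument is complete.
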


Let $m$ and $n$ be two relatively prime integers such that $1 \le m < n/2$.
Let $\rho(\lambda)$ be the rotation number of the convex caustic $C_\lambda$.
We want to characterize the convex caustic $C_\lambda$ whose tangent
billiard trajectories form closed polygons with $n$ sides
that makes $m$ turns inside $Q_0$ or, equivalently,
the caustic parameter $\lambda \in (0,b)$ such that $\rho(\lambda) = m/n$.
Such caustic parameter is unique because $\rho:(0,b) \to \Rset$ is
an increasing analytic function such that $\rho(0) = 0$ and $\rho(b) = 1/2$,
see~\cite{CasasRamirez2010}.
Any $(m,n)$-periodic billiard trajectory gives rise to
a $(n-m,n)$-periodic one by inverting the direction of motion.
Hence, a convex caustic is $(m,n)$-resonant if and only if it is
also $(n-m,n)$-resonant.
This explains why we can assume that $m < n/2$.

The caustic $C_\lambda$ is the $(m,n)$-resonant convex caustic if and only if
\begin{equation}\label{eq:ResonantCondition}
n \delta = 4 K m.
\end{equation}
This identity has the following geometric interpretation.
When a billiard trajectory makes one turn around $C_\lambda$,
the old angular variable $\varphi$ changes by $2\pi$,
so the new angular variable $t$ changes by $4K$.
On the other hand, we have seen that the variable $t$ changes
by $\delta$ when a billiard trajectory bounces once.
Hence, a billiard trajectory inscribed in $Q_0$ and circumscribed around
$C_\lambda$ makes exactly $m$ turns around $C_\lambda$ after $n$ bounces
if and only if~(\ref{eq:ResonantCondition}) holds.

From now on, $k$ and $\delta$ will denote the modulus and the shift
defined in~(\ref{eq:ModulusShift}).
We will also assume that relation~(\ref{eq:ResonantCondition}) holds,
since we only deal with resonant caustics.
We will skip the dependence of the Jacobian elliptic functions
on the modulus.

The billiard dynamics through the foci of the ellipse can also be simplified
by using a suitable variable $s \in \Rset$.
If a billiard trajectory passes alternatively through the foci,
its segments tend to the major axis of the ellipse both in future and past.
We consider the change of variables
$(-\pi/2,\pi/2) \ni \varphi \mapsto s \in \Rset$ given by
\begin{equation}\label{eq:NonperiodicVariable}
\tanh s = \sin \varphi,\qquad \sech s = \cos \varphi,
\end{equation}
in order to give explicit formulas for this dynamics.
If $\varphi$ moves from $-\pi/2$ to $\pi/2$,
then $s$ moves from $-\infty$ to $+\infty$.
Thus, any $2\pi$-periodic function in $\varphi$ generates a
non-periodic function in $s$.
We will usually denote the function in $s$ by putting a hat above
the name of the function in $\varphi$.
For instance, the parametrization of the upper semi-ellipse
$Q_0^+ = Q_0 \cap \{ y > 0\}$ given by
\begin{equation}\label{eq:HomoclinicParametrization}
\hat{q}_0: \Rset \to Q_0^+,\qquad
\hat{q}_0(s) = (a \tanh s, b \sech s),
\end{equation}
is obtained from parameterization~(\ref{eq:ClassicalParameterization}).
Clearly, $\hat{q}_0(s) = q_0(\varphi)$.

The billiard dynamics through the foci becomes a constant shift
$s \mapsto s+h$ in the variable $s \in \Rset$ for a
suitable shift $h > 0$.

\begin{lemma}[\cite{DelshamsRamirez1996}]\label{lem:DelshamsRamirez}
Once fixed the semi-lengths $0 < b < a$,
let $c = \sqrt{a^2 - b^2}$ be the semi-focal distance and
let $h > 0$ be the quantity determined by
\begin{equation}\label{eq:CharacteristicExponent}
\sinh(h/2) = c/b,\qquad \cosh(h/2) = a/b, \qquad \tanh(h/2) = c/a.
\end{equation}
The segment from $\hat{q}_0(s)$ to $-\hat{q}_0(s+h)$
passes through the focus $(-c,0)$ for all $s \in \Rset$.
\end{lemma}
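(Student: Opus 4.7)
My plan is to verify the lemma by a direct collinearity computation, using hyperbolic sum-to-product identities to extract the defining relation $\tanh(h/2)=c/a$ as the precise condition under which the claim holds.

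First, I would observe that the statement is equivalent to the collinearity of the three points
\[
P_1 = \hat{q}_0(s) = (a\tanh s,\, b\sech s),\qquad
P_2 = -\hat{q}_0(s+h) = (-a\tanh(s+h),\, -b\sech(s+h)),\qquad
F_- = (-c,0).
\]
Collinearity is encoded by the vanishing of the $2\times 2$ determinant of the vectors $P_1 - F_-$ and $P_2 - F_-$. Expanding this determinant yields the expression
\[
ab\bigl[\sech s\,\tanh(s+h) - \sech(s+h)\,\tanh s\bigr]
- bc\bigl[\sech(s+h) + \sech s\bigr],
\]
so the lemma reduces to showing that this quantity vanishes identically in $s$.

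Next, I would rewrite each bracket over the common denominator $\cosh s \cosh(s+h)$. The first bracket becomes $\bigl(\sinh(s+h)-\sinh s\bigr)/\cosh s\cosh(s+h)$, and the standard identities
\[
\sinh(s+h) - \sinh s = 2\cosh(s+h/2)\sinh(h/2),\qquad
\cosh s + \cosh(s+h) = 2\cosh(s+h/2)\cosh(h/2)
\]
allow the whole expression to be factored as
\[
\frac{2\cosh(s+h/2)}{\cosh s\,\cosh(s+h)}\,\bigl[ab\sinh(h/2) - bc\cosh(h/2)\bigr].
\]
The $s$-dependence factors out completely, and the remaining bracket vanishes precisely when $\tanh(h/2) = c/a$, which is the defining relation for $h$ given in~(\ref{eq:CharacteristicExponent}); the compatibility of the three formulas $\sinh(h/2)=c/b$, $\cosh(h/2)=a/b$, $\tanh(h/2)=c/a$ with the identity $\cosh^2-\sinh^2=1$ is automatic from $c^2=a^2-b^2$, so $h$ is well defined.

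There is no real obstacle here beyond choosing the right form of the hyperbolic identities; once the sum-to-product identities centered at $s+h/2$ are applied, the $s$-dependence factors out cleanly and the defining relation $\tanh(h/2)=c/a$ falls out as the exact condition for collinearity. For completeness, I would also briefly note that the map $s \mapsto -\hat q_0(s+h)$ lands on the lower semi-ellipse, so the chord $\overline{P_1 P_2}$ is a genuine billiard segment through $F_-$, and that the analogous statement through the focus $(c,0)$ follows by the symmetry $s \mapsto -s-h$ of the parametrization.
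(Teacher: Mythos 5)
The paper does not actually prove this lemma: it is stated as a quoted result from the reference [DelshamsRamirez1996], so there is no in-paper argument to compare against. Your direct verification is correct and self-contained. The determinant of $P_1-F_-$ and $P_2-F_-$ does expand to $ab\bigl[\sech s\,\tanh(s+h)-\tanh s\,\sech(s+h)\bigr]-bc\bigl[\sech s+\sech(s+h)\bigr]$, the sum-to-product identities centered at $s+h/2$ factor out the $s$-dependence exactly as you claim, and the surviving factor $ab\sinh(h/2)-bc\cosh(h/2)$ vanishes precisely under the defining relation $\tanh(h/2)=c/a$. Your closing remarks are also the right ones to make: since $\hat{q}_0(s)$ lies on $Q_0^+$ and $-\hat{q}_0(s+h)$ on $Q_0^-$, collinearity of the three points upgrades to the focus actually lying on the chord, which is what the lemma asserts. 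This is a complete proof of the statement; the only thing to add, if one wanted to match the role the lemma plays in the paper, is the observation (already implicit in your setup) that iterating $s\mapsto s+h$ with alternating signs produces the actual billiard orbit through the foci, but that is the content of Remark~\ref{rem:One2Four} rather than of the lemma itself.
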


Note that $\lim_{s \to \pm \infty} \hat{q}_0(s) = (\pm a,0)$,
which shows up that the trajectories through the foci tend to bounce between
the vertices of the major axis of the ellipse.
It is known that these vertices form a two-periodic hyperbolic
trajectory whose \emph{characteristic exponent} is $h$.
That is, the eigenvalues of the differential of the billiard map
at the two-periodic hyperbolic points are $\lambda = \rme^h$ and
$\lambda^{-1} = \rme^{-h}$.
Following a standard terminology in problems of splitting of separatrices,
we will say that the parameterizations~(\ref{eq:SubharmonicParameterization})
and~(\ref{eq:HomoclinicParametrization}) are \emph{natural
parameterizations} of the billiard dynamics tangent to the
convex caustic $C_\lambda$ and through the foci, respectively.

\begin{remark}\label{rem:One2Four}
We can associate four different billiard trajectories to each
$s \in \Rset$.
The first two ones are $\big( (-1)^n\hat{q}_0(s+nh) \big)_{n \in \Zset}$
and $\big( (-1)^n\hat{q}_0(s-nh) \big)_{n \in \Zset}$,
which have the same starting point $\hat{q}_0(s) \in Q^+_0$
but are traveled in opposite directions.
The last two ones are their symmetric images with respect to the origin:
$\big( (-1)^{n+1}\hat{q}_0(s+nh) \big)_{n \in \Zset}$ and
$\big( (-1)^{n+1}\hat{q}_0(s-nh) \big)_{n \in \Zset}$,
which start in a point on the lower semi-ellipse $Q^-_0$.
Hence, there is a one-to-four correspondence between $s$ and
the homoclinic billiard trajectories inside the ellipse $Q_0$.
Indeed, we should consider $s$ defined modulo $h$,
because $s$ and $s+h$ give rise to the same set of
four homoclinic trajectories.
\end{remark}

The billiard dynamics through the foci corresponds to the
caustic parameter $\lambda = b$, so it should be obtained as
the limit of the billiard dynamics tangent to the convex caustic
$C_\lambda$ when $\lambda \to b^-$.
See Figure~\ref{fig:AlmostFlatCaustic}.
Note that $C_\lambda$ flattens into the segment of the $x$-axis enclosed
by the foci of the ellipse when $\lambda \to b^-$.
We confirm this idea in the following lemma.
We also stress that $\lim_{\lambda \to b^-} \delta \neq h$.
This has to do with the minus sign that appears in Lemma~\ref{lem:DelshamsRamirez}
in front of the point $\hat{q}_0(s+h)$.

\begin{figure}
\iffigures
\psfrag{Q}{$Q_0$}
\psfrag{C}{$C_\lambda$}
\psfrag{qp}{$\tilde{q}_0(t+\delta)$}
\psfrag{qt}{$-\hat{q}_0(s-h)$}
\psfrag{qm}{$\tilde{q}_0(t-\delta)$}
\psfrag{qh}{$-\hat{q}_0(s+h)$}
\psfrag{q}{$\tilde{q}_0(t) = \hat{q}_0(s)$}
\includegraphics[height=3in]{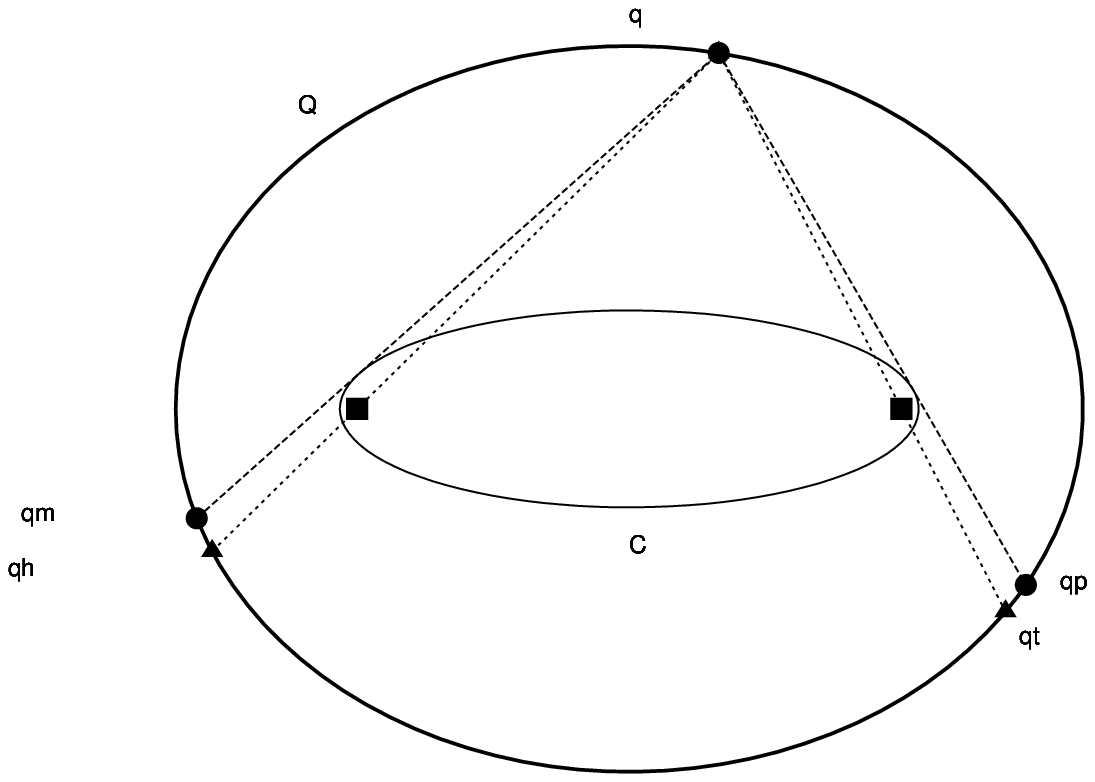}
\else
\vspace{3in}
\fi
\caption{A billiard trajectory (dashed line) tangent to the ellipse
$C_\lambda$ tends to a billiard trajectory (dotted line) through the foci
(the two solid squares) as $\lambda \to b^-$.
The values of $t$ and $s$ are chosen in such a way that
$\tilde{q}_0(t) = \hat{q}_0(s)$.}
\label{fig:AlmostFlatCaustic}
\end{figure}

\begin{lemma}\label{lem:SingularLimits}
Let $k \in (0,1)$, $K=K(k) > 0$, $K'=K'(k) = K(\sqrt{1-k^2})>0$,
and $\delta \in (0,2K)$ be the modulus,
the complete elliptic integral of the first kind,
the complete elliptic integral of the first kind of the complementary modulus,
and the constant shift associated to a convex caustic $C_\lambda$.
Set $\zeta = 2K - \delta \in (0,2K)$.
Then:
\[
\lim_{\lambda \to b^-} k = 1,\qquad
\lim_{\lambda \to b^-} K = +\infty,\qquad
\lim_{\lambda \to b^-} K' = \pi/2,\qquad
\lim_{\lambda \to b^-} \zeta = h,
\]
where $h > 0$ is the characteristic exponent defined
in~\textup{(\ref{eq:CharacteristicExponent})}.
Besides,
\[
\lim_{\lambda \to b^-} \tilde{q}_0(t) = \hat{q}_0(t),\qquad
\lim_{\lambda \to b^-} \tilde{q}_0(t \pm \delta) = -\hat{q}_0(t \mp h),
\]
and both limits are uniform on compacts sets of $\Rset$,
but not on $\Rset$.
\end{lemma}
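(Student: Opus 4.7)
My strategy is to reduce each claim to an elementary manipulation of the defining identities~(\ref{eq:ModulusShift}) and~(\ref{eq:CharacteristicExponent}), combined with standard facts about Jacobi elliptic functions. The limits $k \to 1$, $K \to +\infty$, and $K' \to \pi/2$ are immediate: substituting $\lambda \to b^-$ into $k^2 = (a^2-b^2)/(a^2-\lambda^2)$ gives $k \to 1$, whence $K(k) \to +\infty$ is classical and $K' = K(\sqrt{1-k^2}) \to K(0) = \pi/2$.

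The crucial step is $\lim_{\lambda \to b^-} \zeta = h$. I would write $\zeta/2 = K - \delta/2$ and invoke the identity $\sn(K - u) = \cn u/\dn u$ to get
\[
\sn^2(\zeta/2) \;=\; \frac{\cn^2(\delta/2)}{\dn^2(\delta/2)} \;=\; \frac{1 - \sn^2(\delta/2)}{1 - k^2 \sn^2(\delta/2)}.
\]
Substituting $\sn(\delta/2) = \lambda/b$ and $k^2 = (a^2 - b^2)/(a^2 - \lambda^2)$, I expect the algebra to collapse to the clean formula $\sn^2(\zeta/2, k) = (a^2 - \lambda^2)/a^2$. Passing to $\lambda \to b^-$, so that $k \to 1^-$ and $\sn(u, k) \to \tanh u$, this becomes $\tanh^2(\zeta_0/2) = c^2/a^2$, and~(\ref{eq:CharacteristicExponent}) then identifies $\zeta_0 = h$.

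For the parameterization limits, the degenerations $\sn(\cdot, k) \to \tanh$ and $\cn(\cdot, k) \to \sech$ as $k \to 1^-$ immediately give $\tilde{q}_0(t) = (a \sn t, b \cn t) \to (a \tanh t, b \sech t) = \hat{q}_0(t)$. For the shifted case I would rewrite $t \pm \delta = (t \mp \zeta) + (\pm 2K)$ and use the quasi-periodicities $\sn(u \pm 2K) = -\sn u$, $\cn(u \pm 2K) = -\cn u$ to obtain $\tilde{q}_0(t \pm \delta) = -(a \sn(t \mp \zeta), b \cn(t \mp \zeta))$; taking $k \to 1$ and $\zeta \to h$ yields $-\hat{q}_0(t \mp h)$. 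Uniform convergence on compact sets of $\Rset$ follows from joint continuity of the Jacobi elliptic functions in $(u, k)$ on compacta away from singularities; failure of uniformity on all of $\Rset$ is because $\sn(\cdot, k)$ is $4K$-periodic and bounded by $1$, whereas $\tanh$ approaches $\pm 1$ monotonically, so the first zero of $\sn(\cdot,k)$ beyond $2K$ (which escapes to infinity with $K$) exhibits a gap that does not shrink.

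The main obstacle I anticipate is the algebraic simplification producing $\sn^2(\zeta/2) = 1 - \lambda^2/a^2$; it requires both a well-chosen Jacobi identity and a cancellation that depends delicately on the exact relationship between $k$, $\lambda$, $a$, and $b$. Once that identity is secured, the remaining verifications are substitutions and invocations of the standard degeneration $k \to 1$.
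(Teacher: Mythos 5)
Your proposal is correct and follows the same overall route as the paper's proof: the first three limits from the defining formulas, the parameterization limit from the degenerations $\sn(\cdot,k)\to\tanh$ and $\cn(\cdot,k)\to\sech$, and the shifted limit by writing $t\pm\delta=(t\mp\zeta)\pm 2K$ and using $2K$-antiperiodicity. The one place where you add genuine content is the limit $\lim_{\lambda\to b^-}\zeta=h$, which the paper dismisses as ``a tedious computation'': your identity $\sn^2(\zeta/2)=(a^2-\lambda^2)/a^2$ is correct, and in fact the paper derives $\sn(\zeta/2)=\sqrt{a^2-\lambda^2}/a$ only later, in the proof of Lemma~\ref{lem:tildemu1}, via the addition formula rather than via your cleaner $\sn(K-u)=\cn u/\dn u$. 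One small point to tighten: passing from $\sn^2(\zeta/2,k)=(a^2-\lambda^2)/a^2$ to $\tanh^2(\zeta_0/2)=c^2/a^2$ presupposes that $\zeta$ converges. This is easiest to justify by inverting: $\zeta/2=F\bigl(\arcsin(\sqrt{a^2-\lambda^2}/a),k\bigr)$ with $F$ the incomplete elliptic integral of the first kind, and $F(\phi,k)\to\tanh^{-1}(\sin\phi)$ as $k\to1^-$ uniformly for $\phi$ bounded away from $\pi/2$, so $\zeta/2\to\tanh^{-1}(c/a)=h/2$ by~(\ref{eq:CharacteristicExponent}). Your arguments for uniformity on compacta and its failure on $\Rset$ (compare $\sn(2K,k)=0$ with $\tanh(2K)\to1$) are fine and go beyond what the paper bothers to record.
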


\begin{proof}
The first limit follows from the definition $k^2 = (a^2 - b^2)/(a^2 - \lambda^2)$.
We know that $\lim_{k \to 1^-} K(k) = +\infty$ and $K'(1) = K(0) = \pi/2$,
which gives the second and third limits.
The fourth limit is a tedious computation using properties of elliptic functions.
The property $\lim_{\lambda \to b^-} \tilde{q}_0(t) = \hat{q}_0(t)$
is a direct consequence of the limits
\[
\lim_{k \to 1^-} \sn(t,k) = \tanh t,\qquad
\lim_{k \to 1^-} \cn(t,k) = \sech t,
\]
which can be found in~\cite{AbramowitzS72}.
Finally,
\[
\lim_{\lambda \to b^-} \tilde{q}_0(t \pm \delta) =
-\lim_{\lambda \to b^-} \tilde{q}_0(t \pm \delta \mp 2K) =
-\hat{q}_0(t \mp h),
\]
where we have used that $\tilde{q}_0(t)$ is $2K$-antiperiodic and
$\lim_{\lambda \to b^-} \zeta = h$.
\end{proof}

\section{An ellipse under the curvature flow}\label{sec:EllipseCurvatureFlow}

Let $\Tset = \Rset/2\pi\Zset$.
Let $Q_0 = q_0(\Tset)$, $q_0:\Tset \to \Rset^2$,
be a closed smooth embedded curve in the plane.
This curve may not be an ellipse.
The $t$-time curvature flow of $Q_0$ is the curve
$Q_t = q_t(\Tset) = q(\Tset;t)$ where the map
$q: \Tset \times [0,\tau) \to \Rset^2$, $q=q(\varphi;t)$, satisfies
the initial value problem
\begin{equation}\label{eq:CurvatureFlow}
\frac{\partial q}{\partial t} = \kappa N,\qquad
q(\cdot,0) = q_0.
\end{equation}
Here, $\kappa$ and $N$ are the curvature
and the unit inward normal vector, respectively.
Observe that $\varphi$ is not, in general, the arc-length parameter.

M.~Gage and R.~Hamilton~\cite{GageHamilton1986} showed that if
$Q_0$ is strictly convex, then the curvature flow is defined for
$t \in [0,\tau)$, where $\tau = A_0/2\pi$ and $A_0$ is the area
enclosed by $Q_0$.
Besides, $Q_t$ shrinks to a point and becomes more circular as $t \to \tau^-$.

Let $Q_0$ be the ellipse~(\ref{eq:Ellipse}).
We want to study a small deformation of $Q_0$ under the curvature flow.
Henceforth, in order to emphasize that we are only interested in
infinitesimal deformations of $Q_0$,
we will denote the infinitesimally deformed ellipse by the symbol $Q_\epsilon$,
instead of $Q_t$.

We consider the elliptic coordinates $(\mu,\varphi)$ associated to
the ellipse $Q_0$.
That is, $(\mu,\varphi)$ are defined by relations
\begin{equation}\label{eq:EllipticCoordinates}
x = c \cosh \mu \sin \varphi,\qquad
y = c \sinh \mu \cos \varphi,
\end{equation}
where $c = \sqrt{a^2-b^2}$ is the semi-focal distance of $Q_0$.
The ellipse $Q_0$ in these elliptic coordinates reads as $\mu \equiv \mu_0$,
where $\cosh \mu_0 = a/c$ and $\sinh \mu_0 = b/c$.
Therefore, the deformation $Q_\epsilon$ of the ellipse $Q_0$
can be written in elliptic coordinates as
\begin{equation}\label{eq:EllipticPerturbation}
\mu = \mu_\epsilon(\varphi) =
\mu_0 + \epsilon \mu_1(\varphi) + \Order(\epsilon^2),
\end{equation}
for some $2\pi$-periodic smooth function $\mu_\epsilon: \Rset \to \Rset$.
If a curve is symmetric with respect to a line,
so is its curvature flow deformation, as long as it exists.
Thus, the deformation $Q_\epsilon$ has the axial symmetries
of the ellipse $Q_0$ with respect to both coordinates axis.
This means that $\mu_\epsilon(\varphi)$ is even and $\pi$-periodic.
Next, we compute the first order term of this function.
That is, we compute the function $\mu_1(\varphi)$.

\begin{lemma}\label{lem:CurvatureFlow}
Let $Q_\epsilon$ be the deformation under the $\epsilon$-time
curvature flow of the ellipse~\textup{(\ref{eq:Ellipse})}.
If we write the deformed ellipse $Q_\epsilon$ as in
equation~\textup{(\ref{eq:EllipticPerturbation})},
then
\begin{equation}\label{eq:mu1}
\mu_1(\varphi) =
\frac{-ab}{(a^2 \cos^2 \varphi + b^2 \sin^2 \varphi)^2}.
\end{equation}
\end{lemma}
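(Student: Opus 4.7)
The plan is to match the normal component of the velocity of $Q_\epsilon$ (parameterized by $\varphi$ via elliptic coordinates) with the curvature $\kappa_0$ of $Q_0$. The key observation is that the curvature flow equation $\partial_t q = \kappa N$ only prescribes the normal velocity of the curve; the tangential component depends on the parameterization. Since the parameterization of $Q_\epsilon$ by $\varphi$ inherited from the elliptic coordinates is not, in general, the one produced by the flow, we only have the relation
\[
\left\langle \left.\tfrac{\partial q_\epsilon}{\partial \epsilon}\right|_{\epsilon=0}, N_0 \right\rangle = \kappa_0,
\]
which is exactly one scalar equation for the one unknown function $\mu_1(\varphi)$.

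First I would write the parameterization of $Q_\epsilon$ from~\eqref{eq:EllipticCoordinates} and~\eqref{eq:EllipticPerturbation}, namely
\[
q_\epsilon(\varphi) = \bigl(c\cosh\mu_\epsilon(\varphi)\sin\varphi,\; c\sinh\mu_\epsilon(\varphi)\cos\varphi\bigr),
\]
and differentiate at $\epsilon = 0$, using $c\cosh\mu_0 = a$ and $c\sinh\mu_0 = b$, to obtain
\[
\left.\tfrac{\partial q_\epsilon}{\partial \epsilon}\right|_{\epsilon=0} = \mu_1(\varphi)\,(b\sin\varphi,\, a\cos\varphi).
\]
Next I would compute the inward unit normal to $Q_0$: since $q_0'(\varphi) = (a\cos\varphi,-b\sin\varphi)$ and the parameterization is clockwise, rotating by $-\pi/2$ and normalizing gives
\[
N_0(\varphi) = \frac{1}{\sqrt{a^2\cos^2\varphi+b^2\sin^2\varphi}}(-b\sin\varphi,-a\cos\varphi).
\]
Pairing the two vectors above yields
\[
\left\langle \left.\tfrac{\partial q_\epsilon}{\partial \epsilon}\right|_{\epsilon=0}, N_0 \right\rangle = -\mu_1(\varphi)\sqrt{a^2\cos^2\varphi+b^2\sin^2\varphi}.
\]

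Finally I would compute the curvature of $Q_0$ via the standard planar formula, obtaining
\[
\kappa_0(\varphi) = \frac{ab}{(a^2\cos^2\varphi+b^2\sin^2\varphi)^{3/2}},
\]
taken positive for the inward normal convention. Equating and solving gives~\eqref{eq:mu1}. The only conceptual obstacle is justifying that one only matches normal components; once that is stated clearly, the rest is two short derivative computations and an algebraic solve, with no use of deeper properties of the curvature flow beyond the defining equation~\eqref{eq:CurvatureFlow}.
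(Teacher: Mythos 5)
Your proposal is correct and follows essentially the same route as the paper: expand the elliptic-coordinate parameterization in $\epsilon$, compute $\kappa_0$ and $N_0$, and solve a single linear equation for $\mu_1$. The only difference is that you project the velocity onto $N_0$ and match normal components, whereas the paper equates the full vectors $\kappa_0 N_0$ and $\mu_1(\varphi)(b\sin\varphi,a\cos\varphi)$ directly --- which is legitimate here precisely because these two vectors are parallel (confocal coordinates are orthogonal, so $\partial_\mu$ is normal to $Q_0$); your normal-projection framing makes that tacit point explicit but does not change the computation.
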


\begin{proof}
Let $q:\Tset \times [0,\tau) \to \Rset$,
$q=q_t(\varphi) = q(\varphi;t)$,
be the solution of the initial value problem~(\ref{eq:CurvatureFlow}),
where $q_0(\varphi) = (a\sin\varphi,b\cos\varphi)$.
On the one hand, we obtain from~(\ref{eq:CurvatureFlow}) that
$q_\epsilon(\varphi) =
q_0(\varphi) + \epsilon q_1(\varphi) + \Order(\epsilon^2)$,
where $q_1(\varphi) = \kappa_0(\varphi) N_0(\varphi)$,
\[
\kappa_0(\varphi) =
\frac{ab}{\sqrt{(a^2 \cos^2 \varphi + b^2 \sin^2 \varphi)^3}}
\]
is the curvature of the ellipse $Q_0$ at the point $q_0(\varphi)$, and
\[
N_0(\varphi) =
\frac{-1}{\sqrt{a^2 \cos^2 \varphi + b^2 \sin^2 \varphi}}
(b\sin \varphi, a\cos\varphi)
\]
is the inward unit normal vector of the ellipse $Q_0$ at the point $q_0(\varphi)$.

On the other hand,
we deduce from the elliptic coordinates~(\ref{eq:EllipticCoordinates}) that
\begin{align*}
q_\epsilon(\varphi) & =
(c \cosh \mu_\epsilon(\varphi) \sin\varphi,c\sinh \mu_\epsilon(\varphi) \cos\varphi) \\
& =
(a\sin \varphi,b\cos\varphi) +
\epsilon \mu_1(\varphi) (b \sin \varphi, a \cos \varphi) +
\Order(\epsilon^2).
\end{align*}

By combining these two results, we get that
\[
\frac{-ab}{(a^2 \cos^2 \varphi + b^2 \sin^2 \varphi)^2}
(b \sin \varphi, a \cos\varphi) =
\mu_1(\varphi) (b \sin \varphi, a \cos \varphi),
\]
which implies formula~(\ref{eq:mu1}).
\end{proof}

\section{Subharmonic and homoclinic Melnikov potentials}\label{sec:MelnikovPotentials}

Let us introduce the Melnikov potentials associated to the billiard dynamics
inside a perturbed ellipse that has the form~(\ref{eq:EllipticPerturbation})
in the elliptic coordinates~(\ref{eq:EllipticCoordinates}).
We do not assume now that this perturbed ellipse
is obtained through the curvature flow,
but we still assume that the perturbation preserves the axial symmetries of the
unperturbed ellipse~(\ref{eq:Ellipse}).
This means that
$\mu_\epsilon(\varphi) = \mu_0 + \epsilon \mu_1(\varphi) + \Order(\epsilon^2)$
is an even $\pi$-periodic smooth function.

We define the Melnikov potentials in a way already adapted
to our specific billiard setting and then we list their main properties.
See~\cite{PintodeCarvalhoRamirezRos2013}
(respectively,~\cite{DelshamsRamirez1996,DelshamsRamirez1997})
for a more detailed description of subharmonic (respectively, homoclinic)
Melnikov potentials and their relation with the break up of resonant invariant
curves (respectively, splitting of separatrices) of area-preserving twist maps.

\begin{definition}\label{def:SubharmonicMelnikovPotential}
Let $m$ and $n$ be relatively prime integers such that $1 \le m < n/2$.
The \emph{$(m,n)$-subharmonic Melnikov potential} for the billiard dynamics
inside the perturbed ellipse~(\ref{eq:EllipticPerturbation}) is
\begin{equation}\label{eq:SubharmonicMelnikovPotential}
\tilde{L}^{(m,n)}_1: \Rset \to \Rset,\qquad
\tilde{L}^{(m,n)}_1(t) =
2 \lambda \sum_{j=0}^{n-1} \tilde{\mu}^{(m,n)}_1(t+j \delta),
\end{equation}
where $C_\lambda$ is the $(m,n)$-resonant convex caustic inside $Q_0$,
the modulus $k \in (0,1)$ and the shift $\delta \in (0,2K)$ are defined
in~(\ref{eq:ModulusShift}), and $\tilde{\mu}_1^{(m,n)}(t) = \mu_1(\varphi)$.
Variables $\varphi$ and $t$ are related through the
change~(\ref{eq:PeriodicVariable}).
\end{definition}

\begin{pro}\label{pro:SunharmonicPotential}
The Melnikov potential~\textup{(\ref{eq:SubharmonicMelnikovPotential})}
satisfies the following properties:
\begin{enumerate}
\item
It is an even $\zeta$-periodic smooth function, where $\zeta = 2K - \delta$;
\item
It has critical points at $t = 0$ and $t = \zeta/2$;
\item
If it is not constant, the caustic $C_\lambda$
does not persist under perturbation~\textup{(\ref{eq:EllipticPerturbation})};
\item
If it does not have degenerate critical points and $\epsilon > 0$ is small
enough, then there is a one-to-one correspondence between its critical points
(modulo its $\zeta$-periodicity) and the $(m,n)$-periodic Birkhoff billiard
trajectories inside the deformed ellipse~\textup{(\ref{eq:EllipticPerturbation})}.
\end{enumerate}
\end{pro}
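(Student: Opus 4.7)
The plan is to split the proposition into two groups. Parts (1) and (2) are a direct symmetry argument on the definition of $\tilde{L}_1^{(m,n)}$, while parts (3) and (4) are an application of the already developed Poincaré--Melnikov theory for area-preserving twist maps from~\cite{PintodeCarvalhoRamirezRos2013}. The key preliminary observation, which I would record first, is how the assumed symmetries of $\mu_1(\varphi)$ translate to the variable $t$. Since $\sn t$ is odd and $\am(t+2K) = \am(t) + \pi \pmod{2\pi}$, the even $\pi$-periodic smoothness of $\mu_1(\varphi)$ passes to even $2K$-periodic smoothness of $\tilde{\mu}_1^{(m,n)}(t)$. Combined with the resonance identity $n\delta = 4Km$, this shows that $n\delta$ is a multiple of the period $2K$, a fact I will repeatedly exploit.

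For part (1), smoothness of $\tilde{L}_1^{(m,n)}$ is inherited termwise. For the $\zeta$-periodicity with $\zeta = 2K-\delta$, I substitute and reindex:
\[
\tilde{L}_1^{(m,n)}(t+\zeta) = 2\lambda \sum_{j=0}^{n-1} \tilde{\mu}_1^{(m,n)}\bigl(t + (j-1)\delta + 2K\bigr) = 2\lambda \sum_{j=0}^{n-1} \tilde{\mu}_1^{(m,n)}\bigl(t + (j-1)\delta\bigr),
\]
and then shift the index by one, using that $n\delta$ is a period of $\tilde{\mu}_1^{(m,n)}$ to identify the missing term with the extra one. For evenness, I use the evenness of $\tilde{\mu}_1^{(m,n)}$ and the same $n\delta$-periodicity trick: $\tilde{L}_1^{(m,n)}(-t) = 2\lambda \sum_{j=0}^{n-1} \tilde{\mu}_1^{(m,n)}(t-j\delta)$ and after the substitution $j' = n-j$ together with $\tilde{\mu}_1^{(m,n)}(\,\cdot\, - n\delta) = \tilde{\mu}_1^{(m,n)}(\,\cdot\,)$, the sum reassembles into the original one. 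Part (2) is then immediate: the derivative of an even $\zeta$-periodic function is odd and $\zeta$-periodic, so it vanishes at $t=0$ and at the half-period $t=\zeta/2$, since $\tilde{L}_1^{(m,n)\prime}(\zeta/2) = -\tilde{L}_1^{(m,n)\prime}(-\zeta/2) = -\tilde{L}_1^{(m,n)\prime}(\zeta/2)$.

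Parts (3) and (4) are not reproved from scratch; they are the billiard specialization of a theorem that I would cite directly. In the twist-map framework, the subharmonic Melnikov potential $\tilde{L}_1^{(m,n)}$ arises as the first-order term in $\epsilon$ of the action of the $n$-th iterate of the billiard map restricted to the invariant curve tangent to $C_\lambda$, computed on the natural parameterization of Lemma~\ref{lem:ChangFriedberg}. A resonant rotational invariant curve persists under perturbation only if this first-order obstruction vanishes identically, which gives (3). For (4), the critical points of $\tilde{L}_1^{(m,n)}$ correspond via the implicit function theorem to $(m,n)$-periodic Birkhoff orbits of the perturbed map whenever they are non-degenerate and $\epsilon$ is sufficiently small; the $\zeta$-periodicity of $\tilde{L}_1^{(m,n)}$ exactly accounts for the cyclic ambiguity in choosing the starting vertex of an $n$-periodic trajectory, so the correspondence is one-to-one modulo this period.

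The only non-routine issue is the bookkeeping in part (1), where one has to check that the two shifts $\delta$ and $2K$ interact correctly through the resonance condition; this is the step most likely to hide a sign or index error. Once it is done, parts (3) and (4) are a direct quotation from the twist-map theory, so the main conceptual work of the proof lives entirely in the symmetry computation of part (1).
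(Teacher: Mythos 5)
Your proposal is correct and follows essentially the same route as the paper: parts (3) and (4) are quoted from the twist-map Melnikov theory of the cited reference, and parts (1) and (2) are obtained from the evenness and $2K$-periodicity of $\tilde{\mu}_1^{(m,n)}$ together with the resonance identity $n\delta = 4Km$ (the paper establishes $2K$- and $\delta$-periodicity separately and takes the integer combination $\zeta = 2K-\delta$, whereas you verify $\zeta$-periodicity directly, which is an equivalent bookkeeping). Your explicit reindexing argument for evenness is a detail the paper leaves implicit, but it is the intended justification.
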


\begin{proof}
The last two claims follow directly from results contained
in~\cite{PintodeCarvalhoRamirezRos2013}.
Next, we prove the first two claims.
On the one hand, $\tilde{L}^{(m,n)}_1(t)$ is $2K$-periodic
because $\tilde{\mu}^{(m,n)}_1(t)$ is so.
On the other hand,
we get from the resonant condition~(\ref{eq:ResonantCondition}) that
$\tilde{\mu}^{(m,n)}(t+n\delta) =
 \tilde{\mu}^{(m,n)}_1(t + 4Km) = \tilde{\mu}^{(m,n)}_1(t)$,
so
\[
\tilde{L}^{(m,n)}_1(t+\delta) =
2 \lambda \sum_{j=0}^{n-1} \tilde{\mu}^{(m,n)}_1(t + \delta + j \delta) =
2 \lambda \sum_{j=0}^{n-1} \tilde{\mu}^{(m,n)}_1(t + j \delta) =
\tilde{L}^{(m,n)}_1(t).
\]
This means that $\tilde{L}^{(m,n)}_1(t)$ is also $\delta$-periodic.
Hence, any linear combination with integer coefficients of the periods
$2K$ and $\delta$ is also a period.
We focus on the integer combination $\zeta = 2K - \delta$ due to the result
presented in Lemma~\ref{lem:SingularLimits}.
Finally, any even $\zeta$-periodic smooth function has critical points
at $t = 0$ and $t = \zeta/2$.
\end{proof}

Since
$\mu_\epsilon(\varphi) = \mu_0 + \epsilon \mu_1(\varphi) + \Order(\epsilon^2)$
is even and $\pi$-periodic, we know that
\[
\breve{\mu}_\infty := \mu_1(-\pi/2) = \mu_1(\pi/2).
\]

\begin{definition}\label{def:HomoclinicMelnikovPotential}
The \emph{homoclinic Melnikov potential} for the billiard dynamics
inside the perturbed ellipse~(\ref{eq:EllipticPerturbation}) is
\begin{equation}\label{eq:HomoclinicMelnikovPotential}
\hat{L}_1: \Rset \to \Rset,\qquad
\hat{L}_1(s) = 2 b \sum_{j \in \Zset} \hat{\mu}_1(s+j h),
\end{equation}
where $\hat{\mu}_1(s) = \mu_1(\varphi) - \breve{\mu}_\infty$.
Variables $\varphi$ and $s$
are related through the change~(\ref{eq:NonperiodicVariable}).
\end{definition}

The series $\sum_{j \in \Zset} \hat{\mu}_1(s + j h)$
converges uniformly on compact subsets of $\Rset$ because
$\lim_{s \to \pm \infty}  \hat{\mu}_1(s) =
 \mu_1(\pm \pi/2) - \breve{\mu}_\infty = 0$ and the variable $\varphi$
tends geometrically fast to $\pm \pi/2$ when $s \to \pm \infty$.
We subtract the constant $\breve{\mu}_\infty$ for this reason.

\begin{pro}\label{pro:HomoclinicPotential}
The Melnikov potential~\textup{(\ref{eq:HomoclinicMelnikovPotential})}
satisfies the following properties:
\begin{enumerate}
\item
It is an even $h$-periodic smooth function;
\item
It has critical points at $s = 0$ and $s = h/2$;
\item
If it is not constant, then the separatrices of the unperturbed
billiard map do not persist under the
perturbation~\textup{(\ref{eq:EllipticPerturbation})};
\item
If it does not have degenerate critical points and $\epsilon > 0$
is small enough, then there is a one-to-four correspondence between its critical
points (modulo its $h$-periodicity) and
the transverse primary homoclinic billiard trajectories inside the deformed
ellipse~\textup{(\ref{eq:EllipticPerturbation})}.
\end{enumerate}
\end{pro}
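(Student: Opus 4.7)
The plan is to follow the same template used for Proposition~\ref{pro:SunharmonicPotential}: invoke the general Melnikov theory for area-preserving twist maps developed in \cite{DelshamsRamirez1996,DelshamsRamirez1997} to dispatch claims (3) and (4), and verify claims (1) and (2) by direct manipulation of the series~\eqref{eq:HomoclinicMelnikovPotential}. The proof should begin with a short remark that the series and all its termwise derivatives converge uniformly on compact subsets of $\Rset$: this follows from the exponential decay $\tanh s \to \pm 1$ as $s \to \pm \infty$, which forces $\hat\mu_1(s) \to 0$ geometrically fast, precisely the reason for subtracting $\breve\mu_\infty$ in the definition of $\hat\mu_1$. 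In particular $\hat L_1$ is smooth.

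For claim (1), $h$-periodicity is immediate from re-indexing $j \mapsto j+1$ in the defining sum. Evenness reduces to the evenness of $\hat\mu_1$: the change of variables~\eqref{eq:NonperiodicVariable} sends $s \mapsto -s$ to $\varphi \mapsto -\varphi$ (because $\tanh$ is odd), and $\mu_1(\varphi)$ is even in $\varphi$, so $\hat\mu_1(-s) = \hat\mu_1(s)$; then reindex $j \mapsto -j$ in the sum to obtain $\hat L_1(-s) = \hat L_1(s)$. Claim~(2) is an automatic consequence: any even $h$-periodic smooth function is stationary at $s=0$ and $s = h/2$.

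Claims (3) and (4) are where the cited Melnikov machinery does the work. The homoclinic Melnikov potential $\hat L_1$ is, up to the overall factor $2b$ that reflects the choice of natural parametrization~\eqref{eq:HomoclinicParametrization}, the first-order generating function measuring the difference of the actions of the perturbed stable and unstable manifolds along the unperturbed separatrix. The formula in~\cite{DelshamsRamirez1996} yields that a non-constant $\hat L_1$ forces a nontrivial first-order splitting of the separatrices, giving claim~(3). For claim~(4), a non-degenerate critical point of $\hat L_1$ persists, by the implicit function theorem applied to the splitting function, as a transverse zero of the latter for $\epsilon>0$ small, hence as a transverse primary homoclinic point; the one-to-four multiplicity is exactly the multiplicity described in Remark~\ref{rem:One2Four}, where each $s$ modulo $h$ labels four homoclinic trajectories (two opposite directions of travel, each with its reflection through the origin). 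The only non-routine point in carrying out this step is verifying that the normalization conventions of \cite{DelshamsRamirez1996,DelshamsRamirez1997} match the definition~\eqref{eq:HomoclinicMelnikovPotential}, which is a bookkeeping exercise involving the change from the perturbation $\mu_1(\varphi)$ in elliptic coordinates to a displacement in the direction normal to $Q_0$; the main substantive obstacle, left to Section~\ref{sec:Computations}, is proving that $\hat L_1$ is in fact not constant for the specific $\mu_1$ of Lemma~\ref{lem:CurvatureFlow}.
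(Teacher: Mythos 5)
Your proposal is correct and follows essentially the same route as the paper: claims (3) and (4) are delegated to the Melnikov machinery of \cite{DelshamsRamirez1996,DelshamsRamirez1997}, with the one-to-four multiplicity explained exactly as in Remark~\ref{rem:One2Four}. The only difference is that you verify claims (1) and (2) by hand (re-indexing for $h$-periodicity, evenness of $\mu_1$ for evenness, and the standard stationarity of even periodic functions at $0$ and $h/2$), which the paper folds into the citation here but carries out explicitly in the parallel Proposition~\ref{pro:SunharmonicPotential}.
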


\begin{proof}
It follows from results contained in~\cite{DelshamsRamirez1996,DelshamsRamirez1997}.
The correspondence is one-to-four because each critical
point gives rise to two different homoclinic ``paths''
(mirrored by the central symmetry with respect to the origin)
and each ``path'' can be traveled in two directions.
See Remark~\ref{rem:One2Four}.
\end{proof}

\section{Computations with elliptic functions}\label{sec:Computations}

Let us assume that the perturbed ellipse~(\ref{eq:EllipticPerturbation})
is the $\epsilon$-time curvature flow of the ellipse~(\ref{eq:Ellipse}),
so that the first order term $\mu_1(\varphi)$ has the form~(\ref{eq:mu1}).
We can not apply the result about non-persistence of resonant
convex caustics established in~\cite{PintodeCarvalhoRamirezRos2013} 
or the result about splitting of separatrices established
in~\cite{DelshamsRamirez1996} to this curvature flow setting,
because the function~(\ref{eq:mu1}) is not entire in the variable $\varphi$.
Nevertheless, many of the ideas developed
in~\cite{DelshamsRamirez1996,PintodeCarvalhoRamirezRos2013}
are still useful.

Let $\tilde{\mu}^{(m,n)}_1: \Rset \to \Rset$ be the function defined by
$\tilde{\mu}^{(m.n)}_1(t) = \mu_1(\varphi)$, so
\begin{equation}\label{eq:tildemu1}
\tilde{\mu}^{(m,n)}_1(t) =
\frac{-ab}{(a^2 \cn^2 t + b^2 \sn^2 t)^2}.
\end{equation}
Here, $C_\lambda$ is the $(m,n)$-resonant convex caustic inside $Q_0$,
the modulus $k \in (0,1)$ and the shift $\delta \in (0,2K)$ are defined
in~(\ref{eq:ModulusShift}), and variables $\varphi$ and $t$ are related
through the change~(\ref{eq:PeriodicVariable}).
We skip the dependence of the Jacobian elliptic functions
on the modulus $k$.

Analogously, let $\hat{\mu}_1:\Rset \to \Rset$ be the function
defined by $\hat{\mu}_1(s) = \mu_1(\varphi) - \breve{\mu}_\infty$, so
\begin{equation}\label{eq:hatmu1}
\hat{\mu}_1(s) = \frac{a}{b^3} - \frac{ab}{(a^2 \sech^2 s + b^2 \tanh^2 s)^2}.
\end{equation}
The key observation in what follows is that~(\ref{eq:tildemu1}) can be
analytically extended to an elliptic function defined over $\Cset$,
whereas~(\ref{eq:hatmu1}) can be analytically extended to a
meromorphic function over $\Cset$.
We list below the main properties of these extensions.

\begin{lemma}\label{lem:tildemu1}
Let $m$ and $n$ be two relatively prime integers such that $1 \le m < n/2$.
Let $C_\lambda$ be the $(m,n)$-resonant elliptical caustic of the ellipse $Q_0$.
Let $\delta \in (0,2K)$ be the shift defined by $\sn(\delta/2) = \lambda/b$,
so relation~\textup{(\ref{eq:ResonantCondition})} holds.
Set $\zeta = 2K-\delta \in (0,2K)$.
The function~\textup{(\ref{eq:tildemu1})} is an even elliptic function of order four,
periods $2K$ and $2K'\rmi$, and double poles in the set
\[
T = T_- \cup T_+, \qquad
T_\pm = t_\pm + 2K \Zset + 2K'\rmi \Zset,\qquad
t_\pm = \pm \zeta/2 + K'\rmi.
\]
It has no other poles.
There exist two Laurent coefficients $\alpha_2,\alpha_1 \in \Cset$,
with $\alpha_2 \neq 0$, such that
\[
\tilde{\mu}^{(m,n)}_1(t_\pm + \tau) =
\frac{\alpha_2}{\tau^2} \pm \frac{\alpha_1}{\tau} + \Order(1),\qquad \tau \to 0.
\]
\end{lemma}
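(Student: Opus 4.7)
The plan is to exploit the algebraic identity $a^2\cn^2 t + b^2\sn^2 t = a^2 - c^2\sn^2 t$, which follows from $\sn^2 + \cn^2 = 1$ together with $c^2 = a^2 - b^2$. This rewrites the function as $\tilde{\mu}_1^{(m,n)}(t) = -ab/(a^2 - c^2\sn^2 t)^2$, from which ellipticity with periods $2K$ and $2K'\rmi$ and evenness are transparent: $\sn$ is odd and satisfies $\sn(t + 2K) = -\sn t$ and $\sn(t + 2K'\rmi) = \sn t$, hence $\sn^2 t$ is even with these two periods, and so is $\tilde\mu_1^{(m,n)}$.

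Next, I would locate the poles. At the poles of $\sn$, the expression $1/(a^2 - c^2\sn^2 t)^2$ vanishes to order four, so all poles of $\tilde\mu_1^{(m,n)}$ arise from zeros of $a^2 - c^2\sn^2 t$, that is, from $\sn t = \pm a/c$. Since $|a/c| > 1$ these have no real solutions. I would apply the imaginary quarter-period identity $\sn(u + K'\rmi) = 1/(k\sn u)$ to reduce $\sn(t_0 + K'\rmi) = \pm a/c$ to $\sn t_0 = \pm c/(ka) = \pm\sqrt{a^2-\lambda^2}/a$, using $k^2 = c^2/(a^2-\lambda^2)$. Then I would verify, combining $\sn(K-u) = \cn u/\dn u$ with $\sn(\delta/2) = \lambda/b$ and the resulting formulas for $\cn(\delta/2)$ and $\dn(\delta/2)$, that $\sn(K - \delta/2) = \sqrt{a^2-\lambda^2}/a$. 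Consequently, the solutions inside the fundamental parallelogram $[0, 2K) + [0, 2K')\rmi$ are exactly $t_\pm = \pm\zeta/2 + K'\rmi$. Because $\sn^2 t$ is an elliptic function of order two on this parallelogram, these two points (simple zeros of $a^2 - c^2 \sn^2 t$, since $\sn'(t_\pm) \neq 0$ as shown below) exhaust its zero set; squaring the denominator turns them into double zeros. Hence $\tilde\mu_1^{(m,n)}$ has double poles at $t_\pm$ and no others, so its order is four.

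Finally, for the Laurent expansion at $t_\pm$, I would Taylor expand $\sn(t_\pm + \tau) = \pm a/c + \sn'(t_\pm)\tau + \Order(\tau^2)$, and compute $\sn'(t_\pm)^2 = (1 - \sn^2 t_\pm)(1 - k^2\sn^2 t_\pm) = b^2\lambda^2/[c^2(a^2-\lambda^2)]$, which is nonzero. Then $(a^2 - c^2\sn^2(t_\pm + \tau))^2 = 4a^2c^2\sn'(t_\pm)^2\tau^2 + \Order(\tau^3)$, so the double-pole coefficient $\alpha_2 = -ab/[4a^2c^2 \sn'(t_\pm)^2]$ is nonzero and takes the same value at both poles because it depends only on $\sn^2 t_\pm = a^2/c^2$. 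The $\pm\alpha_1/\tau$ structure then follows from evenness: since $-t_+ \equiv t_-$ modulo the period lattice, the Laurent series at $t_-$ equals that at $t_+$ under $\tau \mapsto -\tau$, so even-index coefficients coincide while odd-index coefficients flip sign. The main obstacle is the pole-locating step, in which several Jacobi identities must be threaded together carefully to match the resonance data; once those points are identified, the order count and Laurent expansion reduce to routine bookkeeping.
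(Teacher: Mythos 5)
Your proof is correct and follows essentially the same route as the paper: both arguments reduce the problem to locating the two simple zeros of the order-two elliptic denominator $a^2\cn^2 t+b^2\sn^2 t$ in a fundamental cell, identify them as $t_\pm=\pm\zeta/2+K'\rmi$ via the value $\sn(\zeta/2)=\sqrt{a^2-\lambda^2}/a$ and standard Jacobi identities, and then derive the $\alpha_2/\tau^2\pm\alpha_1/\tau$ structure from evenness and periodicity. The only differences are cosmetic (you write the denominator as $a^2-c^2\sn^2 t$ and use the quarter-period shift $\sn(u+K'\rmi)=1/(k\sn u)$ where the paper verifies $f(t_\pm)=0$ directly via the addition formula, and you additionally compute $\alpha_2$ in closed form).
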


\begin{proof}
We know that the square of the elliptic cosine is an even elliptic function
of order two and periods $2K$ and $2K'\rmi$. 
Thus, the function
\[
f(t) = a^2 \cn^2 t + b^2 \sn^2 t = b^2 + (a^2-b^2) \cn^2 t
\]
has the same properties.
(We have used the identity $\sn^2 + \cn^2 \equiv 1$.)
Hence, the function $f(t)$ has exactly two roots (counted with multiplicity)
in the complex cell
\[
C = \left \{ t \in \Cset : -K \le \Re t < K,\ 0 \le \Im t < 2K' \right\}.
\]
Let us find them.
On the one hand,
the values of the Jacobian elliptic functions at $t = K$ are
\[
\sn K = 1, \qquad \cn K = 0,\qquad
\dn K = \sqrt{1-k^2} = \sqrt{(b^2 - \lambda^2)/(a^2 - \lambda^2)}.
\]
On the other hand, the values of the Jacobian elliptic functions at
$t = \delta/2$ are $\sn(\delta/2) = \lambda/b$,
\[
\cn(\delta/2) = b^{-1}\sqrt{b^2-\lambda^2},\qquad
\dn(\delta/2) = a b^{-1} \sqrt{(b^2 - \lambda^2)/(a^2 - \lambda^2)}.
\]
Therefore, the addition formula for the elliptic sine implies that
\[
\sn (\zeta/2) = \sn(K - \delta/2) =
\frac{\sn K \cn(\delta/2) \dn(\delta/2) - \sn(\delta/2) \cn K \dn K}
     {1-k^2 \sn^2 K \sn^2(\delta/2)} =
\sqrt{a^2 - \lambda^2}/a.
\]
Next, we check that the function $f(t)$ vanishes at the points $t = t_\pm$:
\begin{align*}
f(t_\pm) & = b^2 + (a^2-b^2) \cn^2(\pm \zeta/2 + K'\rmi) \\
& =
b^2 + (a^2-b^2) \left(1 - k^{-2} \sn^{-2}(\pm \zeta/2) \right) = 0.
\end{align*}
We note that $t_\pm = \pm \zeta/2 + K'\rmi \in C$,
so these are the two roots we were looking for and, in addition,
they are simple roots.
From the parity and periodicity of $f(t)$, we also deduce that
\[
f'(t_-) =
f'(-\zeta/2 + K'\rmi) =
-f'(\zeta/2 - K'\rmi) =
-f'(\zeta/2 + K'\rmi) =
-f'(t_+).
\]
Finally, all the properties of the function~(\ref{eq:tildemu1})
follow directly from the fact that $\tilde{\mu}^{(m,n)}_1 = -ab/f^2$.
It suffices to take $\alpha_2 = -ab/(f'(t_+))^2 = -ab/(f'(t_-))^2 \neq 0$.
\end{proof}

\begin{lemma}\label{lem:hatmu1}
Let $h > 0$ be the characteristic exponent~\textup{(\ref{eq:CharacteristicExponent})}.
The function~\textup{(\ref{eq:hatmu1})} is an even meromorphic
$\pi\rmi$-periodic function with double poles in the set
\[
S = S_- \cup S_+, \qquad
S_\pm = s_\pm + \pi\rmi \Zset,\qquad
s_\pm = \pm h/2 + \pi\rmi/2.
\]
It has no other poles.
There exist two Laurent coefficients $\beta_2,\beta_1 \in \Cset$, with $\beta_2 \neq 0$,
such that
\[
\hat{\mu}_1(s_\pm + \sigma) =
\frac{\beta_2}{\sigma^2} \pm \frac{\beta_1}{\sigma} + \Order(1),\qquad
\sigma \to 0.
\]
\end{lemma}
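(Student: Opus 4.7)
The approach is to mirror the proof of Lemma~\ref{lem:tildemu1}, replacing Jacobian elliptic functions with hyperbolic ones. Using the identity $\tanh^2 s + \sech^2 s \equiv 1$, I would first rewrite the denominator as
\[
g(s) := a^2 \sech^2 s + b^2 \tanh^2 s = b^2 + c^2 \sech^2 s,
\]
which is visibly even, $\pi\rmi$-periodic and meromorphic on $\Cset$, with (double) poles only at $\pi\rmi/2 + \pi\rmi \Zset$. Since $\hat{\mu}_1 = a/b^3 - ab/g^2$ and $1/g^2$ has a quadruple zero at every pole of $g$, the only candidate poles of $\hat{\mu}_1$ are the zeros of $g$, which must now be located.

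Next, I would solve $g(s) = 0$ in the fundamental strip $0 \le \Im s < \pi$. The equation reads $\cosh s = \pm \rmi c/b$; writing $s = x + \rmi y$, the real part $\cosh x \cos y = 0$ forces $y = \pi/2$, and then the imaginary part $\sinh x \sin y = \pm c/b$ gives $x = \pm h/2$ thanks to~(\ref{eq:CharacteristicExponent}). Thus the only zeros in the strip are $s_\pm = \pm h/2 + \pi\rmi/2$, so the pole set of $\hat{\mu}_1$ is exactly $S = S_- \cup S_+$.

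To see that these zeros are simple, I would compute $g'(s) = -2c^2 \sech^2 s\, \tanh s$ and evaluate it at $s_+$ using $\cosh s_+ = \rmi c/b$ and $\sinh s_+ = \rmi \cosh(h/2) = \rmi a/b$, which (again via~(\ref{eq:CharacteristicExponent})) yields $\sech^2 s_+ = -b^2/c^2$, $\tanh s_+ = a/c$ and hence $g'(s_+) = 2ab^2/c \neq 0$. Evenness of $g$ then gives $g'(s_-) = -g'(s_+)$, while evenness of $\hat{\mu}_1$ is inherited from $g$.

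For the Laurent expansions at $s_\pm$, I would substitute $g(s_\pm + \sigma) = g'(s_\pm)\sigma + \tfrac12 g''(s_\pm)\sigma^2 + \Order(\sigma^3)$ into $-ab/g^2$ and expand, obtaining
\[
\beta_2 = \frac{-ab}{(g'(s_+))^2} = \frac{-ab}{(g'(s_-))^2} \neq 0,\qquad
\beta_1 := \frac{ab\, g''(s_+)}{(g'(s_+))^3}.
\]
Since $g$ is even, $g''(s_-) = g''(s_+)$ while $(g'(s_-))^3 = -(g'(s_+))^3$, which produces precisely the announced sign flip $\pm \beta_1/\sigma$ between $s_+$ and $s_-$. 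I do not anticipate a real obstacle: the hyperbolic setting is actually more transparent than the elliptic one of Lemma~\ref{lem:tildemu1}, since $g$ collapses to a single term in $\sech^2 s$. The only step needing a bit of care is the elementary hyperbolic algebra that identifies $\cosh s_\pm = \pm\rmi c/b$ with the characteristic exponent $h$.
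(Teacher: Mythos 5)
Your proposal is correct and follows essentially the same route as the paper: the reduction $g(s)=b^2+c^2\sech^2 s$, the location of the zeros at $s_\pm=\pm h/2+\pi\rmi/2$ via the characteristic exponent, the simplicity check through $g'(s_\pm)=\pm 2ab^2/c\neq 0$, and the identification $\beta_2=-ab/(g'(s_\pm))^2$. The only cosmetic differences are that you solve $\cosh s=\pm\rmi c/b$ by separating real and imaginary parts where the paper uses the identity $\cosh^2 s=\cosh^2 r\Leftrightarrow s\mp r\in\pi\rmi\Zset$, and that you make the coefficient $\beta_1$ and its sign flip explicit, which the paper leaves implicit.
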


\begin{proof}
The square of the hyperbolic secant is an even meromorphic $\pi\rmi$-periodic function.
Thus, the function
\[
g(s) = a^2 \sech^2 s + b^2 \tanh^2 s = b^2 + c^2 \sech^2 s
\]
has the same properties.
We have used the identities $\sech^2 + \tanh^2 \equiv 1$ and $c^2 = a^2 - b^2$.
Next, we look for all the roots of $g(s)$.
We note that
\[
g(s) = 0 \Leftrightarrow
\cosh^2 s = - c^2/b^2 = -\sinh^2(h/2) = \cosh^2(h/2 + \pi\rmi/2) \Leftrightarrow
s \in S.
\]
We have used that $\cosh^2 s = \cosh^2 r$ if and only if
$s - r \in \pi\rmi\Zset$ or $s + r \in \pi\rmi\Zset$.
These roots are simple.
In fact, if $s_* \in S$,
then $\cosh^2 s_* = -c^2/b^2$ and $\sinh^2 s_* = - a^2/b^2$,
so $g(s_*) = 0$ and $g'(s_*) = -2c^2 \sinh s_* /\cosh^3 s_* \neq 0$.
From the parity and periodicity of $g(s)$, we deduce that
$g'(s_-) = -g'(h/2-\pi\rmi/2) = -g'(s_+)$.
Finally, all the properties of $\hat{\mu}_1(s)$ follow directly
from the fact that $\hat{\mu}_1 = a/b^3 -ab/g^2$.
It suffices to take $\beta_2 = -ab/(g'(s_+))^2 = -ab/(g'(s_-))^2 \neq 0$.
\end{proof}

\begin{pro}\label{pro:SubharmonicNotConstant}
Let $\alpha_2 \neq 0$ be the dominant Laurent coefficient introduced in
Lemma~\ref{lem:tildemu1}.
The $(m,n)$-subharmonic Melnikov potential
\[
\tilde{L}^{(m,n)}_1(t) =
2 \lambda \sum_{j=0}^{n-1} \tilde{\mu}^{(m,n)}_1(t+j \delta),\qquad
\tilde{\mu}^{(m,n)}_1(t) = \frac{-ab}{(a^2 \sn^2 t + b^2 \cn^2 t)^2},
\]
is an even elliptic function of order two with periods $\zeta$ and $2K' \rmi$,
poles in the set
\begin{equation}\label{eq:PolesMelnikovPotential}
\mathcal{T} = t_\star + \zeta \Zset + 2K'\rmi \Zset,\qquad
t_\star = \zeta/2 + K'\rmi,
\end{equation}
and principal parts
\[
\tilde{L}^{(m,n)}_1(t_\star + \tau) =
\left\{
\begin{array}{ll}
4\lambda \alpha_2 \tau^{-2} + \Order(1) \mbox{ as $\tau \to 0$}, &
\mbox{ if $n$ is odd}, \\
8\lambda \alpha_2 \tau^{-2} + \Order(1) \mbox{ as $\tau \to 0$}, &
\mbox{ if $n$ is even}.
\end{array}
\right.
\]
In particular, it is not constant.
Besides, its only real critical points are the points of the set
$\zeta \Zset/2$, and all of them are nondegenerate.
\end{pro}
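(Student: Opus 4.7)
The plan is to read off the analytic structure of $\tilde{L}_1^{(m,n)}$ from the elliptic data of $\tilde{\mu}_1^{(m,n)}$ established in Lemma~\ref{lem:tildemu1}. Since $\tilde{L}_1^{(m,n)}$ is a finite sum of shifts of an elliptic function with periods $2K$ and $2K'\rmi$, it extends to a meromorphic $2K'\rmi$-periodic function on $\Cset$; combined with the evenness and $\zeta$-periodicity provided by Proposition~\ref{pro:SunharmonicPotential}, it is then an even elliptic function with periods $\zeta$ and $2K'\rmi$. What remains is to pin down the pole lattice, extract the principal part at $t_\star$, and count real critical points.

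For the pole set, the $j$-th summand has double poles at $t_\pm - j\delta$ modulo $2K\Zset + 2K'\rmi\Zset$. The identity $\delta = 2K - \zeta$ gives $-j\delta \equiv j\zeta \pmod{2K}$, so collecting over $j = 0, \dots, n-1$ produces poles whose real parts are the odd multiples $(2j \pm 1)\zeta/2$ of $\zeta/2$ modulo $2K$, with common imaginary part $K'\rmi$. These are exactly the points of $\mathcal{T}$.

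The principal part at $t_\star = \zeta/2 + K'\rmi$ follows from counting contributing summands. A $t_+$-type contribution appears when $j\delta \equiv 0 \pmod{2K}$, i.e.\ when $n \mid 2jm$; since $\gcd(m,n)=1$, this selects $j=0$ for odd $n$ and $j \in \{0, n/2\}$ for even $n$. A $t_-$-type contribution, with opposite $\alpha_1$-residue, arises when $j\delta \equiv \delta \pmod{2K}$, giving $j=1$ (resp.\ $j \in \{1, n/2+1\}$). Summing the Laurent data from Lemma~\ref{lem:tildemu1}, the $\alpha_1$ terms cancel pairwise and the $\alpha_2$ terms add to $2\alpha_2$ or $4\alpha_2$ according to the parity of $n$; the prefactor $2\lambda$ then yields the stated coefficients $4\lambda\alpha_2$ and $8\lambda\alpha_2$. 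Since $\alpha_2 \neq 0$, the function is not constant, and the order-two claim follows because $\mathcal{T}$ contributes a single pole per fundamental domain.

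For the real critical points, the derivative $(\tilde{L}_1^{(m,n)})'$ is an odd elliptic function whose only singularities are triple poles in $\mathcal{T}$, hence it has order three. Two of its three zeros are $0$ and $\zeta/2$, forced by evenness of $\tilde{L}_1^{(m,n)}$ about each point. Under the involutions $z \mapsto -z$ and $z \mapsto \zeta - z$, the third zero must be a common fixed point, so it lies in $\{0, \zeta/2, K'\rmi, t_\star\}$; since $t_\star$ is a pole and $0, \zeta/2$ are already accounted for, the third zero is $K'\rmi$, which is purely imaginary. Hence the only real critical points in a period are $0$ and $\zeta/2$, each a simple zero of the derivative and so nondegenerate. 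The main technical hurdle will be the combinatorial step of enumerating the summands contributing a pole at $t_\star$ and verifying that the $\alpha_1$-residues cancel to produce the exact leading coefficients stated; the remaining steps reduce to standard elliptic-function bookkeeping.
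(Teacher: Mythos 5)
Your treatment of the principal part at $t_\star$ and of the critical points is essentially the paper's argument: the paper selects the singular summands via $t_+ + j\delta \in T_\pm \Leftrightarrow 2jm \in n\Zset$ (resp.\ $2(j-1)m \in n\Zset$), lets the $\pm\alpha_1$ residues cancel, and concludes nondegeneracy by noting that the odd derivative, being elliptic of order three, has exactly the three zeros $0$, $\zeta/2$, $K'\rmi$ per cell. Two cosmetic differences: for even $n$ the paper first collapses the sum to $4\lambda\sum_{j=0}^{n/2-1}\tilde{\mu}_1(t+j\delta)$ and then finds two singular summands instead of your four (same total $8\lambda\alpha_2$); and it reads off the zero at $K'\rmi$ directly from oddness plus $2K'\rmi$-periodicity, whereas your ``already accounted for'' step is slightly circular --- you must still rule out that the third zero sits at $0$ or $\zeta/2$ (do it by noting that an odd elliptic function of order three would then vanish to order three there and could have no other zero, contradicting the zero at the other half-period).

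There is, however, one step that does not close, and it is exactly the step the paper's proof silently skips. You correctly obtain that the poles of the sum have real parts $(2j\pm1)\zeta/2$ \emph{modulo $2K$}, i.e.\ the pole set is $t_\star + \zeta\Zset + 2K\Zset + 2K'\rmi\Zset$; the assertion that ``these are exactly the points of $\mathcal{T}$'' needs $2K\Zset \subseteq \zeta\Zset$, which fails in general because $2K/\zeta = n/(n-2m)$ need not be an integer. Since $\gcd(m,n)=1$, one has $\zeta\Zset + 2K\Zset = \bigl(2K\gcd(2,n)/n\bigr)\Zset$, so the pole set equals $\mathcal{T}$ only when $n-2m=\gcd(2,n)$. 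Concretely, for $(m,n)=(1,5)$ the shift is $\delta=4K/5$, the sum is visibly $(2K/5)$-periodic, and the summand $j=2$ produces an extra double pole at $K+K'\rmi\notin\mathcal{T}$; correspondingly the order-two claim relative to the lattice $\zeta\Zset+2K'\rmi\Zset$, and the claim that $\zeta\Zset/2$ exhausts the real critical points, require $n-2m\le 2$. This does not affect non-constancy, which uses only the pole at $t_\star$ itself, and the paper's proof supplies nothing more than yours here (it merely asserts the derivative ``has triple poles in the set~(\ref{eq:PolesMelnikovPotential})''); but since your write-up makes the modulo-$2K$ reduction explicit, you should either justify the inclusion $2K\Zset\subseteq\zeta\Zset$ or acknowledge that the true period lattice is the finer one.
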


\begin{proof}
We skip the dependence of $\tilde{\mu}^{(m,n)}_1(t)$ and
$\tilde{L}^{(m,n)}_1(t)$ on $(m,n)$ for simplicity.
The finite sum
$\tilde{L}_1(t) = 2 \lambda \sum_{j=0}^{n-1} \tilde{\mu}_1(t+j \delta)$
can be analytically extended to an elliptic function $\tilde{L}_1: \Cset \to \Cset$
defined over the whole complex plane, see Lemma~\ref{lem:tildemu1}.
The point $t_+ \in \Cset$ is a singularity of $\tilde{\mu}_1(t + j \delta)$
if and only if $t_+ + j\delta \in T = T_- \cup T_+$.
Besides,
\begin{align*}
t_+ + j \delta \in T_+ & \Leftrightarrow
j\delta \in 2K\Zset \Leftrightarrow
2jm \in n \Zset \Leftrightarrow
j \in \{0,n/2\},\\
t_+ + j \delta \in T_- & \Leftrightarrow
(j-1)\delta \in 2K\Zset \Leftrightarrow
2(j-1)m \in n \Zset \Leftrightarrow
j-1 \in \{0,n/2\}.
\end{align*}
We have used that $\delta = 4Km/n$, $t_- = t_+ + \delta - 2K$,
and $\gcd(m,n) =1$.
Equalities $j=n/2$ and $j-1 = n/2$ only can take place when $n$ is even.
Hence, we distinguish two cases:
\begin{itemize}
\item
If $n$ is odd, then $\tilde{\mu}_1(t)$ and $\tilde{\mu}_1(t+\delta)$ are
the only terms in the sum that have a singularity at $t = t_+$,
so that
\begin{align*}
\tilde{L}_1(t_+ + \tau) & =
2\lambda\tilde{\mu}_1(t_+ + \tau) + 2\lambda\tilde{\mu}_1(t_+ + \delta + \tau) +
\Order(1) \\
& =
2\lambda\tilde{\mu}_1(t_+ + \tau) + 2\lambda\tilde{\mu}_1(t_- + \tau) +
\Order(1) \\
& =
4\lambda \alpha_2 \tau^{-2} + \Order(1) \quad \mbox{as $\tau \to 0$}.
\end{align*}
\item
If $n$ is even, then $n \delta /2 = 2Km$ and
$\tilde{L}_1(t) = 4 \lambda \sum_{j=0}^{n/2-1} \tilde{\mu}_1(t+j \delta)$.
We note that $\tilde{\mu}_1(t)$ and $\tilde{\mu}_1(t+\delta)$ are
the only terms in this new sum that have a singularity at $t = t_+$,
so
\begin{align*}
\tilde{L}_1(t_+ + \tau) & =
4\lambda\tilde{\mu}_1(t_+ + \tau) + 4\lambda\tilde{\mu}_1(t_+ + \delta + \tau) +
\Order(1) \\
& =
4\lambda\tilde{\mu}_1(t_+ + \tau) + 4\lambda\tilde{\mu}_1(t_- + \tau) +
\Order(1) \\
& =
8\lambda \alpha_2 \tau^{-2} + \Order(1) \quad \mbox{as $\tau \to 0$}.
\end{align*}
\end{itemize}

Thus, the analytic extension $\tilde{L}_1:\Cset \to \Cset$
has a double pole at $t = t_+$ in both cases, which implies that
$\tilde{L}_1:\Rset \to \Rset$ is not constant.

Next, let us prove that the points in the set $\zeta \Zset/2$ are
the only real critical points of $\tilde{L}_1(t)$,
and all of them are nondegenerate.
The derivative $\tilde{L}'_1(t)$ is odd,
has periods $\zeta$ and $2K'\rmi$,
has triple poles in the set~(\ref{eq:PolesMelnikovPotential}),
and vanishes at the points in the set
$\{0, \zeta/2, K'\rmi \} + \zeta \Zset + 2K'\rmi \Zset$
due to its symmetry and periodicities.
These critical points are nondegenerate and they are the only critical points
because $\tilde{L}'_1(t)$ is an elliptic function of order three.
\end{proof}

\begin{pro}\label{pro:HomoclinicNotConstant}
Let $\beta_2 \neq 0$ be the dominant Laurent coefficient introduced in
Lemma~\ref{lem:hatmu1}.
The homoclinic Melnikov potential
\[
\hat{L}_1(s) =
2 b \sum_{j \in \Zset} \hat{\mu}_1(s+jh),\qquad
\hat{\mu}_1(s) = \frac{a}{b^3} - \frac{ab}{(a^2 \sech^2 s + b^2 \tanh^2 s)^2},
\]
is an even elliptic function of order two with periods $h$ and $\pi\rmi$,
poles in the set
\[
\mathcal{S} = s_\star + h\Zset + \pi\rmi \Zset,\qquad
s_\star = h/2 + \pi\rmi/2,
\]
and principal parts
\[
\hat{L}_1(s_\star + \sigma) = 4 b \beta_2 \sigma^{-2} + \Order(1),
\qquad \sigma \to 0.
\]
In particular, it is not constant.
Besides, its only real critical points are the points of the set
$h \Zset/2$, and all of them are nondegenerate.
\end{pro}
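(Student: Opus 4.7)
The proof will mirror closely the argument for Proposition~\ref{pro:SubharmonicNotConstant}, with the finite subharmonic sum replaced by the doubly infinite homoclinic sum and Lemma~\ref{lem:tildemu1} replaced by Lemma~\ref{lem:hatmu1}. First, I would promote the series $\hat L_1(s) = 2b \sum_{j \in \Zset} \hat{\mu}_1(s+jh)$ from a real function to a meromorphic one on $\Cset$. The argument given after Definition~\ref{def:HomoclinicMelnikovPotential} for convergence on real compact sets extends at once to horizontal strips in $\Cset$ avoiding the poles, because $\sech^2 s$ decays geometrically in $|\Re s|$ uniformly on bounded $|\Im s|$, so $\hat{\mu}_1(s) - \hat{\mu}_1(\pm\infty) = \hat{\mu}_1(s)$ decays exponentially as $\Re s \to \pm\infty$.

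Next I would establish the two periods. The $h$-periodicity of $\hat L_1$ follows by reindexing $j \mapsto j-1$ in the sum, while $\pi\rmi$-periodicity follows from the $\pi\rmi$-periodicity of each term, established in Lemma~\ref{lem:hatmu1}. Then I would locate the poles of $\hat L_1$ in one fundamental domain of the lattice $h\Zset + \pi\rmi\Zset$. The term $\hat\mu_1(s+jh)$ is singular at a point $s_0$ iff $s_0 + jh \in S_- \cup S_+$. Taking $s_0 = s_\star = s_+$ and using $s_+ - s_- = h$, this happens exactly for $j = 0$ (contributing $S_+$) and $j = -1$ (contributing $S_-$). Using the principal parts from Lemma~\ref{lem:hatmu1}, with opposite signs of the $\beta_1$ coefficient at $s_+$ and $s_-$, we obtain
\[
\hat L_1(s_\star + \sigma) = 2b\bigl(\tfrac{\beta_2}{\sigma^2} + \tfrac{\beta_1}{\sigma}\bigr) + 2b\bigl(\tfrac{\beta_2}{\sigma^2} - \tfrac{\beta_1}{\sigma}\bigr) + \Order(1) = 4b\beta_2 \sigma^{-2} + \Order(1),
\]
so the simple-pole residues cancel and $\hat L_1$ has a unique double pole per fundamental domain. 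This makes it an even elliptic function of order two; in particular $\hat L_1$ is not constant.

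Finally, for the critical points, I would differentiate to obtain an odd elliptic function $\hat L'_1$ of order three with a triple pole at $s_\star$. By oddness combined with the period $h$ and the half-period $\pi\rmi/2$ symmetries, $\hat L'_1$ vanishes at each of $s = 0$, $s = h/2$, and $s = \pi\rmi/2$ modulo the lattice. Since an elliptic function of order three has exactly three zeros (counted with multiplicity) per fundamental domain, these three must be simple, so the real zeros of $\hat L'_1$ are exactly $h\Zset/2$ and all are nondegenerate critical points of $\hat L_1$.

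I do not anticipate any genuine obstacle: every ingredient is already available from Lemma~\ref{lem:hatmu1} and Proposition~\ref{pro:HomoclinicPotential}. The only delicate point is the cancellation of simple-pole residues, which rests on the sign asymmetry $g'(s_-) = -g'(s_+)$ encoded in $\pm\beta_1/\sigma$; this is what forces $\hat L_1$ to have order two rather than four and is the structural reason why the Melnikov criterion applies.
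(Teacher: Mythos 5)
Your proposal is correct and follows essentially the same route as the paper: analytic continuation of the series, identification of the two singular terms $j=0,-1$ at $s_\star$ via $s_-=s_+-h$, cancellation of the $\pm\beta_1/\sigma$ residues to get a single double pole per cell, and the order-three count for $\hat L_1'$ to pin down and nondegenerate the critical points. The only difference is that you spell out the exponential decay justifying convergence in complex strips, which the paper leaves implicit.
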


\begin{proof}
The series
$\hat{L}_1(s) = 2 b \sum_{j=0}^{n-1} \hat{\mu}_1(s+jh)$
can be analytically extended to a meromorphic function $\hat{L}_1: \Cset \to \Cset$
defined over the whole complex plane, see Lemma~\ref{lem:hatmu1}.
The point $s_+ \in \Cset$ is a singularity of the $j$-th term
$\hat{\mu}_1(s+jh)$ if and only if
$s_+ + jh \in S = S_- \cup S_+$.
Besides,
\[
s_+ + j h \in S_+ \Leftrightarrow j = 0,\qquad
s_+ + j h \in S_- \Leftrightarrow j = -1,
\]
Here, we have used that $h \in \Rset$ and $s_- = s_+ - h$.
Hence, $\hat{\mu}_1(s-h)$ and $\hat{\mu}_1(s)$ are
the only terms in the sum that have a singularity at $s = s_+$,
so that
\begin{align*}
\hat{L}_1(s_+ + \sigma) & =
2 b \hat{\mu}_1(s_+ - h + \sigma) + 2 b \hat{\mu}_1(s_+ + \sigma) + \Order(1) \\
& =
2 b \hat{\mu}_1(s_- + \sigma) + 2 b \hat{\mu}_1(s_+ + \sigma) +
\Order(1) \\
& =
4 b \beta_2 \sigma^{-2} + \Order(1) \quad \mbox{as $\sigma \to 0$}.
\end{align*}
Therefore, the analytic extension $\hat{L}_1:\Cset \to \Cset$
has a double pole at $s = s_+$, which implies that
the homoclinic Melnikov potential $\hat{L}_1:\Rset \to \Rset$
is not constant.

Finally, the points in the set $h \Zset/2$ are the only
real critical points of $\hat{L}_1(s)$, and all of them are nondegenerate.
This is proved following the same argument at the end of the proof of
the previous proposition.
\end{proof}

The first claims of Theorem~\ref{thm:MainTheorem}
---the break up of all resonant convex caustics and the splitting of
the separatrices in a transverse way---
are a direct consequence of the results above.
K.~Burns and H.~Weiss~\cite{BurnsWeiss1995} proved that such transverse
intersection of separatrices implies that the perturbed system has positive
topological entropy, which gives the third claim of
Theorem~\ref{thm:MainTheorem}.
R. Cushman~\cite{Cushman1978} established that an analytic
area-preserving map with a transverse intersection of stable and
unstable invariant curves can not be integrable, which proves the last claim
of Theorem~\ref{thm:MainTheorem}.
We just note that the billiard dynamics inside an analytic convex curve is
analytic and that the curvature flow preserves the analyticity of the
unperturbed ellipse.

Finally, we establish the relation between the homoclinic Melnikov potential and
the limit of the $(m,n)$-subharmonic Melnikov potential when $m/n \to 1/2$ or,
equivalently, when $\lambda \to b^-$.
We still assume that the perturbed ellipse is obtained by using
the curvature flow, so this is a very specific result.
The relation depends on the parity of the period $n$,
which is a phenomenon that, up to our knowledge,
never takes place in continuous systems.
This is the reason for our interest in it.

\begin{pro}
If $m$ and $n$ are relatively prime integers such that $1 \le m < n/2$,
\[
\lim_{\frac{m}{n} \to \frac{1}{2}} \tilde{L}_1^{(m,n)}(t) =
\mbox{constant} +
\left\{
\begin{array}{rl}
\hat{L}_1(t),   & \mbox{ if $n$ is odd}, \\
2 \hat{L}_1(t), & \mbox{ if $n$ is even},
\end{array}
\right.
\]
uniformly on compact subsets of $\Rset$.
\end{pro}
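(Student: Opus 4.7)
I would first rewrite the subharmonic sum using the $2K$-periodicity of $\tilde\mu^{(m,n)}_1$ (which holds because it depends only on $\sn^2 t$ and $\cn^2 t$) together with the congruence $j\delta\equiv -j\zeta\pmod{2K}$, obtaining
$$\tilde L^{(m,n)}_1(t)=2\lambda\sum_{j=0}^{n-1}\tilde\mu^{(m,n)}_1(t-j\zeta).$$
The parity of $n$ enters at this stage: the orbit of $0$ under $t\mapsto t+\delta$ on $\Rset/2K\Zset$ has cardinality $n$ when $n$ is odd (since $\gcd(2m,n)=1$), but cardinality $n/2$ when $n$ is even (since then $m$ is odd and $\gcd(2m,n)=2$), with each orbit element traversed twice in the sum. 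This directly produces the factor of $2$ in the even case. After a further reindexing using $2K$-periodicity, the sum takes the form $2c_n\lambda\sum_j\tilde\mu^{(m,n)}_1(t-j\zeta)$ with $c_n\in\{1,2\}$ according to the parity of $n$, and $j$ ranging over a centered block of size $n/c_n$.

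Next I would subtract $\breve\mu_\infty=-a/b^3=\mu_1(\pm\pi/2)$ from each summand,
$$\tilde L^{(m,n)}_1(t) - C_{m,n} = 2c_n\lambda\sum_{j}\bigl[\tilde\mu^{(m,n)}_1(t-j\zeta)-\breve\mu_\infty\bigr],$$
where $C_{m,n}$ collects all the $\breve\mu_\infty$ contributions. Note that $C_{m,n}$ itself diverges as $m/n\to 1/2$, so the ``constant'' in the statement should be read as an $(m,n)$-dependent but $t$-independent additive offset. For each fixed $j$, Lemma~\ref{lem:SingularLimits} gives $\tilde\mu^{(m,n)}_1(t-j\zeta)-\breve\mu_\infty\to\hat\mu_1(t-jh)$ uniformly on compact $t$-sets, and $\lambda\to b$, so the contribution of any finite block $|j|\le J_0$ converges uniformly to $2b\sum_{|j|\le J_0}\hat\mu_1(t-jh)$.

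For the tail $|j|>J_0$, the algebraic identity
$$\tilde\mu^{(m,n)}_1(t)-\breve\mu_\infty=\frac{a(a^2-b^2)\cn^2 t\,(a^2\cn^2 t+b^2\sn^2 t+b^2)}{b^3(a^2\cn^2 t+b^2\sn^2 t)^2}$$
bounds the bracketed quantity by an $a,b$-dependent constant times $\cn^2(t-j\zeta)$. The identity $\cn(K-u)=k'\sn u\,\dn u/(1-k^2\sn^2 u)$ together with the asymptotic $k'\sim 4\rme^{-K}$ shows that, for the extreme summands (those with $|j|\zeta$ near $K$), $\cn^2(t-j\zeta)$ has the same $\sech^2$-type exponential decay as $\hat\mu_1(t-jh)$, up to bounded factors. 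This yields a uniform-in-$(m,n)$ tail estimate of the form $C\sum_{|j|>J_0}\sech^2(jh)\to 0$ as $J_0\to\infty$. Combining this with the pointwise convergence on $|j|\le J_0$, I would conclude that $\tilde L^{(m,n)}_1(t)-C_{m,n}\to 2bc_n\sum_{j\in\Zset}\hat\mu_1(t-jh)=c_n\hat L_1(t)$ uniformly on compacts of $\Rset$.

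The main obstacle is the uniform-in-$k$ tail bound: as $m/n\to 1/2$, both the fundamental domain $(-K,K]$ and the centered summation range diverge, and they must do so compatibly for the extreme summands to fall near $\pm K$ where $\cn^2$ is small. The elementary identity $K=n\zeta/(2(n-2m))\ge n\zeta/2$ keeps the maximal $|j|\zeta$ strictly below $K$, and the $\cn(K-\cdot)$ identity then translates the elliptic decay of $\cn^2$ at the edge into the hyperbolic $\sech^2$ decay needed to match that of $\hat\mu_1$.
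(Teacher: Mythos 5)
Your route is genuinely different from the paper's. The paper never manipulates the sum directly: it invokes the rigidity of elliptic functions (an elliptic function is determined, up to an additive constant, by its periods, poles and principal parts) and checks, via Propositions~\ref{pro:SubharmonicNotConstant} and~\ref{pro:HomoclinicNotConstant} and Lemma~\ref{lem:SingularLimits}, that the data $(\zeta,2K'\rmi)$, $t_\star=\zeta/2+K'\rmi$, $4\lambda\alpha_2\tau^{-2}$ (resp.\ $8\lambda\alpha_2\tau^{-2}$) converge to $(h,\pi\rmi)$, $s_\star=h/2+\pi\rmi/2$, $4b\beta_2\sigma^{-2}$ (resp.\ $2\cdot 4b\beta_2\sigma^{-2}$); the factor $2$ is read off the principal part rather than from counting the orbit. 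Your direct term-by-term argument is more elementary and makes explicit both the origin of the factor $2$ (the orbit $\{j\delta\bmod 2K\}$ collapses to $n/2$ points, each hit twice, when $n$ is even) and the divergent additive constant $C_{m,n}=2\lambda n\breve{\mu}_\infty$, at the price of a tail bound uniform in $k$. That bound is available in clean form: since $\am(\cdot,k)$ dominates the Gudermannian function $\mathrm{gd}$ (compare $\varphi'=\sqrt{1-k^2\sin^2\varphi}\ge\cos\varphi$ with $\psi'=\cos\psi$), one has $0\le\cn(u,k)\le\sech u$ for $0\le u\le K$, which turns your $\cn^2$ majorant into the summable $\sech^2$ majorant you need.

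There is, however, one genuine error: the inequality $K=n\zeta/(2(n-2m))\ge n\zeta/2$ is backwards. Since $n-2m\ge1$, in fact $K\le n\zeta/2$, with equality only for $n-2m=1$. Consequently your centered block of $j\zeta$'s stays inside $[-K,K]$ only when $d:=n-2m\in\{1,2\}$. For $d\ge3$ the block wraps $d$ times around the real period $2K$, the extreme summands are of order one rather than exponentially small, and the tail estimate collapses. This is not a repairable technicality: when $d\ge 3$ is fixed the distinct orbit points have spacing $2K/n=\zeta/d\to h/d$, so $\tilde{L}_1^{(m,n)}$ is $(2K/n)$-periodic and its locally uniform limit modulo constants is $2b\sum_{j\in\Zset}\hat{\mu}_1(t-jh/d)$, which is $h/d$-periodic and therefore is neither $\hat{L}_1$ nor $2\hat{L}_1$. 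Thus your computation, once the inequality is corrected, shows that the statement must be read along sequences with $n-2m\in\{1,2\}$; note that the paper's own proof silently needs the same restriction, because the pole lattice and the order claimed in Proposition~\ref{pro:SubharmonicNotConstant} (one double pole per $\zeta\times2K'$ cell) hold only for $n-2m\le2$. State that restriction explicitly and fix the inequality; the rest of your argument goes through.
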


\begin{proof}
The proof is based on the fact that any elliptic function is determined,
up to an additive constant, by its periods, its poles,
and the principal parts of its poles.
The periods, poles, and principal parts of the subharmonic and homoclinic
Melnikov potentials $\tilde{L}^{(m,n)}_1(t)$ and $\hat{L}_1(s)$
are listed in Propositions~\ref{pro:SubharmonicNotConstant}
and~\ref{pro:HomoclinicNotConstant}, respectively.
We only have to see that the former ones tend to the later ones.

Let $\lambda \in (0,b)$ be the caustic parameter such that $C_\lambda$ is
an $(m,n)$-resonant caustic.
It is known that if $m/n \to 1/2$, then $\lambda \to b^-$.
See~\cite[Proposition 10]{CasasRamirez2010}.
Besides, we have seen in Lemma~\ref{lem:SingularLimits} that
$\lim_{\lambda \to b^-} K' = \pi/2$ and $\lim_{\lambda \to b^-} \zeta = h$.
Thus, it suffices to check that $\lim_{\lambda \to b^-} \alpha_2 = \beta_2$,
where $\alpha_2$ and $\beta_2$ are the Laurent coefficients
introduced in Lemmas~\ref{lem:tildemu1} and~\ref{lem:hatmu1}.
This limit is a straightforward computation.
\end{proof}

\bibliographystyle{amsplain}

\end{document}